\def\timestamp{%
Time-stamp: <L-reflection.tex: Wednesday 17-07-2013 at 12:37:45 (cest)>}
\def\stripname Time-stamp: <#1 #2>{#2}
\edef\filedate{\expandafter\stripname\timestamp}
\DeclareMathSymbol\N    0{AMSb}{`N}
\DeclareMathSymbol\Poset0{AMSb}{`P}
\DeclareMathSymbol\Qoset0{AMSb}{`Q}  
\DeclareMathSymbol\restr\mathbin{AMSa}{"16}  
\let\models\undefined
\DeclareMathSymbol\models\mathrel{AMSa}{"0F}
\DeclareMathSymbol\forces\mathrel{AMSa}{"0D}
\DeclareMathSymbol\le    \mathrel{AMSa}{"36}
\DeclareMathSymbol\ge    \mathrel{AMSa}{"3E}
\newcommand\calA{\mathcal{A}}
\newcommand\calB{\mathcal{B}}
\newcommand\calF{\mathcal{F}}
\newcommand\calU{\mathcal{U}}
\newcommand\axiom{\mathsf}
\newcommand\CH{\axiom{CH}}
\newcommand\orpr[2]{\langle#1,#2\rangle}
\newcommand\dom{\operatorname{dom}}
\newcommand\val{\operatorname{val}}
\newcommand\cl{\operatorname{cl}}
\newcommand\bee{\mathfrak{b}}
\newcommand\cee{\mathfrak{c}}
\newcommand\functions[2]{{}^{#1}#2}
\let\fname\dot
\newcommand\preim{^\gets}
\newcommand\omegaseq[1]{\langle{#1}_n:n\in\omega\rangle}
\newcommand\omegaoneseq[1]{\langle{#1}_\alpha:\alpha\in\omega_1\rangle}
\theoremstyle{plain}
\newtheorem{theorem}{Theorem}[section]
\newtheorem{lemma}[theorem]{Lemma}
\newtheorem{proposition}[theorem]{Proposition}
\newtheorem{corollary}[theorem]{Corollary}
\theoremstyle{definition}
\newtheorem{definition}[theorem]{Definition}
\theoremstyle{remark}
\newtheorem{question}{Question}
\newtheorem{example}[theorem]{Example}
\numberwithin{equation}{section}
\begin{document}

\title{Reflecting Lindel\"of and converging $\omega_1$-sequences}

\author[Alan Dow]{Alan Dow\dag}
\address{
Department of Mathematics\\
UNC-Charlotte\\
9201 University City Blvd. \\
Charlotte, NC 28223-0001}
\email{adow@uncc.edu}
\urladdr{http://www.math.uncc.edu/\~{}adow}
\thanks{\dag Research of the first author was supported by NSF grant 
             No.\ NSF-DMS-0901168.} 

\author{Klaas Pieter Hart}
\address{Faculty of Electrical Engineering, Mathematics and Computer Science\\  
         TU Delft\\
         Postbus 5031\\
         2600~GA {} Delft\\
         the Netherlands}
\email{k.p.hart@tudelft.nl}
\urladdr{http://fa.its.tudelft.nl/\~{}hart}

\keywords{compact space, first-countable space, Lindel\"ofness, 
          converging sequence, L-reflection, small diagonal, forcing,
          property K}
\subjclass{Primary: 54D30. 
           Secondary: 03E35, 54A20, 54A25, 54A35, 54D20}


\date{\filedate}

\begin{abstract}
We deal with a conjectured dichotomy for compact Hausdorff spaces:
each such space contains a non-trivial converging $\omega$-sequence
or a non-trivial converging $\omega_1$-sequence.
We establish that this dichotomy holds in a variety of models; these include
the Cohen models, the random real models and any model obtained from a model
of~$\CH$ by an iteration of property~$K$ posets.
In fact in these models every compact Hausdorff space without non-trivial 
converging $\omega_1$-sequences is first-countable and, in addition,
has many $\aleph_1$-sized Lindel\"of subspaces.
As a corollary we find that in these models all compact Hausdorff spaces
with a small diagonal are metrizable.
\end{abstract}

\maketitle

\section*{Introduction}

This paper deals with converging sequences of type~$\omega$ and~$\omega_1$.
If $\gamma$~is a limit ordinal then a sequence 
$\langle x_\alpha:\alpha<\gamma\rangle$ in a topological space is said
to converge to a point~$x$ if for every neighbourhood~$U$ of~$x$
there is an~$\alpha<\gamma$ such that $x_\beta\in U$ for $\beta\ge\alpha$.
To avoid non-relevant cases we shall always assume that our sequences
are injective.

In~\cite{JuSzCvgt} Juh\'asz and Szentmikl\'ossy showed that if a compact space
has a free sequence of length~$\omega_1$ then it has a converging free sequence
of that length --- a sequence $\omegaoneseq{x}$ is \emph{free} if for 
all~$\alpha$ the sets $\{x_\beta:\beta<\alpha\}$ 
and $\{x_\beta:\beta\ge\alpha\}$ have disjoint closures.
One may rephrase this as: a compact space without converging 
$\omega_1$-sequences must have countable tightness.

The authors of~\cite{JuSzCvgt} also recall two questions of Hu\v{s}ek
and Juh\'asz regarding converging $\omega_1$-sequences
\begin{description}
\item[Hu\v{s}ek] does every compact Hausdorff space contain a non-trivial 
     converging $\omega$-sequence or a non-trivial converging 
     $\omega_1$-sequence?
\item[Juh\'asz] does every non first-countable compact Hausdorff space 
     contain a non-trivial converging $\omega_1$-sequence?
\end{description}
In~\cite{MR621987} it was shown that the space~$\beta\N$ does contain
a converging $\omega_1$-sequence, which shows that Hu\v{s}ek's question
is a weakening of Efimov's well-known question in~\cite{MR0253290} 
whether every compact Hausdorff 
space contains a converging $\omega$-sequence or a copy of~$\beta\N$.

For the remainder of the paper we refer to a space without converging 
$\omega_1$-sequence as an \emph{$\omega_1$-free space}. 
Our main result shows that the answer to Juh\'asz' question (and hence
to that of Hu\v{s}ek's) is positive in a large class of models. 
The precise definition will be given later but examples are those
obtained by adding Cohen and random reals and models obtained by iterations
of Hechler forcing.

An important class of $\omega_1$-free spaces consists of those having a 
\emph{small diagonal} --- introduced by Hu\v{s}ek in~\cite{Husek}.
We say that a space, $X$, has a small diagonal if there is no
$\omega_1$-sequence in~$X^2$ that converges to the diagonal~$\Delta(X)$:
a sequence
$\bigl\langle\orpr{x_\alpha}{y_\alpha}:\alpha<\omega_1\bigr\rangle$
in~$X^2$ converges to the diagonal~$\Delta(X)$ if
every neighbourhood of the diagonal contains a tail of the sequence.
Note that if $\omegaoneseq{x}$ converges to~$x$ then 
$\bigl<\orpr{x}{x_\alpha}:\alpha\in\omega\bigr>$ converges to~$\orpr xx$ 
and hence to the diagonal.
A well-studied problem is whether a
compact Hausdorff space with a small diagonal
(which we abbreviate by \emph{csD space}) is metrizable.

The second part of our main result is that, in the same models,
all $\omega_1$-free spaces have many $\aleph_1$-sized Lindel\"of
subspaces.
This will then imply that, in these models, all csD spaces are metrizable. 

We benefit from the results in \cite{JunKosz} concerning the notion of
L-reflecting and the preservation of the Lindel\"of property by
forcing. 

Finally, we should mention that in~\cite{MR1617461} it was shown that
in the Cohen model all csD spaces are metrizable and Juh\'asz' question
has a positive answer.

\subsection*{Status of the problems}

There are consistent examples of compact $\omega_1$-free spaces that are not
first-countable; we review some of these at the end of the paper,
where we will also show that Hu\v{s}ek's question is strictly weaker
than that of~Efimov:
there are many consistent counterexamples to Efimov's question but 
none to Hu\v{s}ek's question (yet).

As to the consistency of the existence of nonmetrizable csD spaces: 
that is still an open problem.

\section{Preliminaries}

\subsection{Elementary sequences and L-reflection}

For a cardinal~$\theta$ we let $H(\theta)$ denote the collection of all sets 
whose transitive closure has cardinality less than~$\theta$ 
(see~\cite{Kunen80}*{Chapter~IV}).   
An $\omega_1$-sequence $\omegaoneseq{M}$ of countable 
elementary substructures of~$H(\theta)$ that satisfies 
$\langle M_\beta:\beta\le\alpha\rangle\in M_{\alpha+1}$ for all~$\alpha$
and $M_\alpha=\bigcup_{\beta<\alpha}M_\beta$ for all limit~$\alpha$
will simply be called an \emph{elementary sequence}. 

Let $X$ be a compact Hausdorff space; an \emph{elementary sequence for~$X$} 
is elementary sequence $\omegaoneseq{M}$ such that $X\in M_0$; of course
in this case $\theta$ should be large enough in order that $X\in H(\theta)$,
in most cases $\theta=(2^\kappa)^+$, where $\kappa=w(X)$, suffices.

\begin{definition} 
We say that a space is weakly L-reflecting if every $\aleph_1$-sized 
subspace is contained in a Lindel\"of subspace of cardinality~$\aleph_1$. 
We say that $X$ is \emph{L-reflecting} if for some regular $\theta$ 
with $X\in H(\theta)$ and any countable~$M_0$ with $X\in M_0\prec H(\theta)$,
there is an elementary sequence $\omegaoneseq{M}$ for~$X$ such that 
$X\cap \bigcup_{\alpha\in \omega_1} M_\alpha$ is  Lindel\"of.
\end{definition}

\subsection{csD spaces}

We will need a technical improvement of Gruenhage's result 
from~\cites{GaryDiag} that hereditarily Lindel\"of csD spaces are metrizable.
It is clear that compact hereditarily Lindel\"of spaces are both first-countable
and L-reflecting; we shall show that the latter two properties suffice
to make csD spaces metrizable.
This will allow us to conclude that in the models from 
Section~\ref{section.forcing}
all csD spaces are metrizable, because they will be seen to
be first-countable and L-reflecting.

We use a convenient characterization of csD spaces obtained by Gruenhage 
in~\cite{GaryDiag}:
for every sequence
$\bigl\langle\orpr{x_\alpha}{y_\alpha}:\alpha\in \omega_1\bigr\rangle$ 
of pairs of points there an uncountable subset~$A$ of~$\omega_1$ such that 
$\{x_\alpha:\alpha\in A\}$ and $\{y_\alpha:\alpha\in A\}$ have disjoint 
closures.

\begin{theorem} 
If a compact first-countable space is L-reflecting and has a small diagonal, 
then it is metrizable.
\end{theorem}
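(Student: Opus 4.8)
The plan is to prove metrizability by showing that the diagonal $\Delta=\Delta(X)$ is a $G_\delta$-set in $X^2$; by the classical theorem that a compact Hausdorff space with a $G_\delta$-diagonal is metrizable, this suffices. Since $X^2$ is compact and hence regular, every Lindel\"of open subset of it is an $F_\sigma$-set, so it is in fact enough to prove that the open set $X^2\setminus\Delta$ is Lindel\"of. First I would record the structural consequence of first countability on which the whole argument rests: because $X$ is Hausdorff and (compact, hence) regular, the boxes $A\times B$ with $A,B$ basic open and $\overline A\cap\overline B=\emptyset$ form a base for the subspace $X^2\setminus\Delta$, and first countability lets me take the sides from a fixed countable neighbourhood assignment. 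I fix, once and for all inside the seed model, such a base and assignment.

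Now suppose, toward a contradiction, that $X^2\setminus\Delta$ is not Lindel\"of, and fix a cover $\calU$ of it by boxes from this base admitting no countable subcover; we may take $\calU\in M_0$. Invoking that $X$ is L-reflecting, I choose the regular $\theta$ from the definition and, starting from a countable $M_0\prec H(\theta)$ with $X,\calU\in M_0$, an elementary sequence $\omegaoneseq{M}$ for which $Y:=X\cap\bigcup_{\alpha<\omega_1}M_\alpha$ is Lindel\"of; write $M=\bigcup_\alpha M_\alpha$ and note the standard facts $M\prec H(\theta)$ and $\omega_1\subseteq M$. By recursion on $\alpha<\omega_1$ I then pick a pair $\orpr{x_\alpha}{y_\alpha}\in M_{\alpha+1}$ lying in $(X^2\setminus\Delta)\setminus\bigcup(\calU\cap M_\alpha)$: this is possible because $\calU\cap M_\alpha$ is countable and so, as $\calU$ has no countable subcover, fails to cover, while the nonempty remainder is an element of $M_{\alpha+1}$. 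In particular every $x_\alpha$ and every $y_\alpha$ belongs to $Y$.

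The engine of the proof is now Gruenhage's characterisation applied to $\bigl\langle\orpr{x_\alpha}{y_\alpha}:\alpha<\omega_1\bigr\rangle$: it yields an uncountable $A\subseteq\omega_1$ for which $\overline{\{x_\alpha:\alpha\in A\}}$ and $\overline{\{y_\alpha:\alpha\in A\}}$ are disjoint, so by compactness there are disjoint open $G\supseteq\overline{\{x_\alpha:\alpha\in A\}}$ and $H\supseteq\overline{\{y_\alpha:\alpha\in A\}}$ with $(G\times H)\cap\Delta=\emptyset$ and $\orpr{x_\alpha}{y_\alpha}\in G\times H$ for all $\alpha\in A$. I then aim to contradict the defining feature of the construction, namely that $\orpr{x_\alpha}{y_\alpha}$ avoids every member of $\calU$ that has appeared by stage $\alpha$. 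Here elementarity does half the work: since $x_\alpha,y_\alpha\in M$ and $\calU\in M$, for each $\alpha\in A$ there is some $U\in\calU\cap M$ with $\orpr{x_\alpha}{y_\alpha}\in U$, and a single such $U$ that appears in the tower early enough yet contains a pair from a later stage would be exactly the contradiction sought.

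Turning that last sentence into a proof is the main obstacle, and it is precisely here that the Lindel\"ofness of the reflection $Y$—the substance of the L-reflection hypothesis—must be spent. The difficulty is that the $U\in\calU\cap M$ produced by elementarity is tailored to the single pair $\orpr{x_\alpha}{y_\alpha}$ and may enter the tower only at stage $\alpha+1$, so no clash with the avoidance clause is visible stage by stage; what is needed is the uniform statement that the uncountably many constructed pairs inside $G\times H$ are already covered by countably many members of $\calU\cap M$. I expect to obtain this by exploiting first countability together with Lindel\"ofness of $Y$: any point at which the pairs cluster is the limit of a convergent $\omega$-subsequence, whose countably many indices are bounded by some $\delta<\omega_1$ and whose terms therefore live within $M_\delta$, so that feeding the Lindel\"of property of $Y$ into the two sides of the boxes collapses the cofinally many covering sets to a countable subfamily lying in a fixed $M_\delta$. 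For any $\alpha\in A$ with $\alpha\ge\delta$ the pair $\orpr{x_\alpha}{y_\alpha}$ is then simultaneously inside one of these $U\in\calU\cap M_\alpha$ and, by construction, outside all of $\calU\cap M_\alpha$, which is absurd. Reconciling the externally built $\omega_1$-sequence with the elementary tower in this way—capturing the relevant limits and covering sets inside $M$—is the technical heart of the argument and the step that upgrades Gruenhage's hereditarily Lindel\"of hypothesis to first countability together with L-reflection.
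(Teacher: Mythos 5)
Your setup (reduce to Lindel\"ofness of $X^2\setminus\Delta$, hence a $G_\delta$ diagonal; run an elementary sequence with $Y=X\cap\bigcup_\alpha M_\alpha$ Lindel\"of; choose pairs $\orpr{x_\alpha}{y_\alpha}\in M_{\alpha+1}$ witnessing failure at stage $\alpha$; apply Gruenhage) parallels the paper's proof, which instead lets $\calB_\alpha$ be the open sets in $M_\alpha$ and picks $x_\alpha,y_\alpha$ not separated by disjoint members of $\calB_\alpha$. But the step you yourself flag as ``the technical heart'' is exactly where the proof is missing, and the sketch you give for it does not work. Bounding the indices of a convergent $\omega$-subsequence by some $\delta$ controls only countably many of the pairs, and there is no reason the cluster structure of the sequence is captured by a single stage; more importantly, ``feeding the Lindel\"of property of $Y$ into the two sides of the boxes'' is not a licit move: Lindel\"ofness of $Y$ does not pass to $Y\times Y$ or to the set of pairs, so you cannot directly extract a countable subfamily of $\calU\cap M_\delta$ covering $\{\orpr{x_\alpha}{y_\alpha}:\alpha\in A\}$ this way.

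The missing idea is a neighbourhood \emph{assignment} on $Y$ itself, tagged by the separation that Gruenhage provides. With $F=\cl\{x_\alpha:\alpha\in A\}$ and $G=\cl\{y_\alpha:\alpha\in A\}$ disjoint and $U\supseteq F$, $V\supseteq G$ disjoint open, use first countability (so that $M_\alpha$ contains a local base at each point of $X\cap M_\alpha$) to choose for each $x\in Y$ a neighbourhood $B_x$ belonging to the stage at which $x$ appears, with $B_x\subseteq U$ if $x\in F$, $B_x\subseteq V$ if $x\in G$, and $B_x\cap(F\cup G)=\emptyset$ otherwise. A \emph{single} application of Lindel\"ofness of $Y$ to this one cover yields an $\alpha$ with $Y\subseteq\bigcup\{B_x:x\in X\cap M_\alpha\}$; then for $\beta\in A$ above $\alpha$ the sets catching $x_\beta$ and $y_\beta$ are forced (because $x_\beta\in F$, $y_\beta\in G$) to lie inside $U$ and $V$ respectively, giving disjoint stage-$\alpha$ neighbourhoods of $x_\beta,y_\beta$ --- in your formulation one then gets a stage-$\alpha$ box inside $X^2\setminus\Delta$ containing $\orpr{x_\beta}{y_\beta}$, whose compact closure is covered by finitely many members of $\calU\cap M_{\alpha+1}$, contradicting the choice of the pair at stage $\beta$. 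Without this tagging device (or an equivalent), the argument as you have written it does not close.
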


\begin{proof} 
Let $X$ be a csD space that is first-countable and L-reflecting.

Let $\omegaoneseq{M}$ be an elementary sequence for~$X$
such that $X\cap\bigcup_{\alpha<\omega_1}M_\alpha$ is Lindel\"of;
we denote this subspace by~$Y$.
For each $\alpha$ let $\calB_\alpha$ be the family of those open subsets of~$X$
that belong to~$M_\alpha$.

If we assume that $X$ is not metrizable then there is no $\alpha$ for which
$\calB_\alpha$~is a base for the topology of~$X$, or even, by compactness,
a $T_2$-separating open cover of~$X$.
Therefore we can find, by elementarity, points 
$x_\alpha,y_\alpha\in X\cap M_{\alpha+1}$
that do not have disjoint neighbourhoods that belong to~$\calB_\alpha$.

We apply Gruenhage's criterion to find an uncountable
subset~$A$ of~$\omega_1$ such that the closed sets
$F=\cl\{x_\alpha:\alpha\in A\}$ and $G=\cl\{y_\alpha:\alpha\in A\}$
are disjoint. 
We take disjoint open sets $U$ and $V$ around $F$ and $G$ respectively.

Since $X$ is first-countable we know that $M_\alpha$ contains a local base
at each point of $X\cap M_\alpha$.
Thus we may choose for each $x\in Y$ a neighbourhood~$B_x$ such that
\begin{itemize}
\item $B_x\subseteq U$ if $x\in F$
\item $B_x\subseteq V$ if $x\in G$
\item $B_x\cap(F\cup G)=\emptyset$ otherwise
\end{itemize}
and in addition $B_x\in\calB_\alpha$ whenever $x\in M_\alpha$.

Since $Y$~is Lindel\"of there is an $\alpha$ such that 
$Y\subseteq\bigcup\{B_x:x\in X\cap M_\alpha\}$.
But now take $\beta\in A$ above~$\alpha$ and take $x$ and~$y$ 
in~$M_\alpha$ such that $x_\beta\in B_x$ and $y_\beta\in B_y$.
It follows readily that $B_x\subseteq U$ and $B_y\subseteq V$, 
so that $x_\beta$ and~$y_\beta$ do have disjoint neighbourhoods that
belong to~$\calB_\alpha$.

This contradiction concludes the proof.
\end{proof}

\subsection{$\omega_1$-free spaces}

Here we include two technical results on $\omega_1$-free spaces that will be 
useful in Section~\ref{section.forcing}.
The first follows from \cite{MR2586011}*{Lemma~2.2} by setting 
$\rho=\mu=\aleph_1$; we give a proof for completeness and to illustrate
the use of elementary sequences.

\begin{theorem} \label{separable}
If every separable subspace of a compact $\omega_1$-free space is 
first-countable, then the space is first-countable. 
\end{theorem}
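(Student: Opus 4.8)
The plan is to argue the contrapositive in its sharpest form: assuming $X$ is not first-countable, I will manufacture a non-trivial converging $\omega_1$-sequence and so contradict $\omega_1$-freeness. Two standing facts set the stage. First, by the Juh\'asz--Szentmikl\'ossy theorem quoted in the introduction a compact $\omega_1$-free space has countable tightness, so every accumulation point is an accumulation point of a countable subset. Second, in a compact Hausdorff space character equals pseudocharacter, so it is enough to produce a single point that is not a $G_\delta$ and derive a contradiction; fix such a point $p$. Now choose an elementary sequence $\omegaoneseq{M}$ for~$X$ with $p\in M_0$, write $M=\bigcup_{\alpha<\omega_1}M_\alpha$, and let $\calU=\{U\in M:U\text{ is open and }p\in U\}$. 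Since $\{G_{\alpha,n}\}$ below will live in~$M$, the whole argument will take place relative to this fixed sequence.

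The heart of the matter is the claim that $\calU$ is a \emph{neighbourhood base} at~$p$, and this is exactly where the separable hypothesis is used. At each stage the subspace $\cl(X\cap M_\alpha)$ is separable, hence first-countable, so $p$ is a $G_\delta$ there; because $M_{\alpha+1}$ contains $\langle M_\beta:\beta\le\alpha\rangle$, and hence $X\cap M_\alpha$ and its closure, elementarity yields inside~$M_{\alpha+1}$ a decreasing family $\{G_{\alpha,n}:n\in\omega\}$ of open sets whose traces form a local base at~$p$ in $\cl(X\cap M_\alpha)$, so that $\bigcap_nG_{\alpha,n}\cap\cl(X\cap M_\alpha)=\{p\}$. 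These sets all belong to~$\calU$. Suppose $\calU$ failed to be a base, witnessed by an open $V\ni p$ with no member of~$\calU$ contained in~$V$; then every $U\in\calU$ meets the compact set $X\setminus V$, the closed sets $\cl U\cap(X\setminus V)$ have the finite intersection property (finite intersections of members of~$\calU$ lie in~$\calU$), and compactness produces a point $q\neq p$ with $q\in\cl U$ for every $U\in\calU$, in particular $q\in\bigcap_{\alpha,n}\cl G_{\alpha,n}$. The main obstacle of the whole proof is to rule this out: one must use the first-countability of the separable approximations $\cl(X\cap M_\alpha)$ to force this $q$ into some $\cl(X\cap M_\alpha)$ and thereby to collapse it onto~$p$, contradicting $q\neq p$. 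Establishing this link between the ambient closures $\cl U$ and the separable traces is where countable tightness and the separable hypothesis are jointly essential, and it is the step I expect to require the real work.

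Once $\calU$ is known to be a neighbourhood base, the rest is routine. It cannot be countable, since a countable base at~$p$ would make $p$ a $G_\delta$; hence $|\calU|=\aleph_1$ and I may enumerate $\calU=\{U_\xi:\xi<\omega_1\}$ and set $W_\xi=\bigcap_{\eta\le\xi}U_\xi$. Because $p$ is not a $G_\delta$, each $W_\xi$ properly contains $\{p\}$; moreover $W_\xi\setminus\{p\}$ must be infinite, for if it were finite then the base property together with the Hausdorff axiom would let me intersect in finitely many further members of~$\calU$ to shrink $W_{\xi'}$ down to $\{p\}$ for some $\xi'>\xi$, again making $p$ a $G_\delta$. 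Thus at stage~$\xi$ I can choose $x_\xi\in W_\xi\setminus(\{p\}\cup\{x_\eta:\eta<\xi\})$, obtaining an injective $\omega_1$-sequence $\omegaoneseq{x}$. Given any neighbourhood $N$ of~$p$, the base property supplies $U_\gamma\subseteq N$, and then $x_\xi\in W_\xi\subseteq U_\gamma\subseteq N$ for all $\xi\ge\gamma$, so $\omegaoneseq{x}$ converges to~$p$. This non-trivial converging $\omega_1$-sequence contradicts the $\omega_1$-freeness of~$X$, completing the argument.
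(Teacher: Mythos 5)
Your argument has a genuine gap at its central step, and you say so yourself: the claim that $\calU=\{U\in M:U\text{ open},\ p\in U\}$ is a neighbourhood base at~$p$ is reduced to ruling out a point $q\neq p$ with $q\in\cl U$ for every $U\in\calU$, and you then defer exactly that to ``the real work.'' Unfortunately that deferred step is where all the content of the theorem lives. Knowing that each $\cl(X\cap M_\alpha)$ is first-countable at~$p$ does not by itself place $q$ inside any $\cl(X\cap M_\alpha)$, and even if $q\in\cl(X\cap M_\alpha)$, the relation $q\in\cl U$ does not yield $q\in\cl\bigl(U\cap\cl(X\cap M_\alpha)\bigr)$ --- the closure of a trace can be strictly smaller than the trace of the closure --- so the local base of the separable approximation cannot be brought to bear without further work. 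Worse, the claim cannot be established at the point where you place it: you reserve $\omega_1$-freeness for the final paragraph, but if the base property held for every compact countably tight space with first-countable separable subspaces, your last paragraph would already manufacture a converging $\omega_1$-sequence in any such non-first-countable space, which is precisely the theorem being proved. So the intermediate claim needs $\omega_1$-freeness in its proof, and no such proof is given.

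For comparison, the paper closes this gap by running the implication the other way. It chooses $x_\alpha\in\bigcap\{U\in M_\alpha:p\in U\text{ open}\}\setminus\{p\}$ for each~$\alpha$ (possible since $p$ is not a $G_\delta$), invokes $\omega_1$-freeness to obtain a complete accumulation point $z\neq p$ of $\{x_\alpha:\alpha\in\omega_1\}$, uses countable tightness to place $z$ in $\cl(X\cap M_\delta)$ for some~$\delta$, and then notes that every open $W\ni p$ with $W\in M_{\delta+1}$ contains all $x_\alpha$ with $\alpha>\delta$ and hence has $z$ in its closure; this shows the separable subspace $\cl(X\cap M_\delta)$ is not first-countable at~$p$, contradicting the hypothesis directly rather than $\omega_1$-freeness. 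That is the argument you would have to supply to justify your base-property claim. A further minor slip: at infinite stages $\xi$ you need $W_\xi\setminus\{p\}$ to be uncountable, not merely infinite, to avoid the countably many earlier points $x_\eta$; this does follow from the same observation (a countable $G_\delta$ containing $p$ would make $\{p\}$ a $G_\delta$), but ``infinite'' as written is not enough.
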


\begin{proof} 
Assume that $X$ is compact and $\omega_1$-free. 
Working contrapositively we assume that $X$ is not first-countable and produce
a separable subspace that is not first-countable.

To this end we let $\omegaoneseq{M}$ be an elementary 
sequence for~$X$. 
By elementarity there will be a point~$x$ in $X\cap M_0$ at which $X$~is not
first-countable; by compactness this means that $\{x\}$ is not a 
$G_\delta$-set in~$X$.
This implies that for each $\alpha\in \omega_1$ there is a point 
$x_\alpha\in M_{\alpha+1}$ that is in
$\bigcap\{U:U\in M_\alpha$ and $U$~is open in~$X\}$
and distinct from~$x$.

Since $X$ is $\omega_1$-free, there is a complete accumulation point, $z$,
of $\{x_\alpha:\alpha \in \omega_1\}$ that is distinct from~$x$. 
Since $X$ has countable tightness there is a $\delta\in\omega_1$ such that 
$z$~is in the closure of~$X\cap M_\delta$. 

We show that $\cl(X\cap M_\delta)$ is not first-countable at~$x$.
Indeed, if $W$~is an open neighbourhood of~$x$ that belongs to~$M_{\delta+1}$
then $x_\alpha\in W$ for all $\alpha>\delta$ and in particular $z\in\cl W$.
This more than shows that $M_{\delta+1}$ does not contain a countable family
of neighbourhoods of~$x$ that would determine a local base at~$x$
in $\cl(X\cap M_\delta)$; by elementarity there is no such family at all.
\end{proof}

For the next result we need a piece of notation and the notion of a 
local $\pi$-net.

If $\calF$ is filter on a set $X$ then $\calF^+$ denotes the family of 
sets that are positive with respect to~$\calF$, i.e.,
$G\in\calF^+$ iff $G$~intersects every member of~$\calF$.

A \emph{local $\pi$-net} at a point, $x$, of a topological space is a family, 
$\calA$,
of non-empty subsets of the space such that every neighbourhood of~$x$ contains
a member of~$\calA$.
Clearly $\{x\}$ is a local $\pi$-net at~$x$ but it may not always be 
a very useful one; the next result produces a local $\pi$-net that consists of
somewhat larger sets.

\begin{theorem} \label{filter}
Let $X$ be a compact space of countable tightness and let $\calF$ be a 
countable filter base in~$X$.
Then there is a point $x$ in $\bigcap\{\cl{F}:F\in\calF\}$
that has a countable $\pi$-net that is contained in~$\calF^+$. 
\end{theorem}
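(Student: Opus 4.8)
The plan is to first normalise the filter base and then isolate the single place where countable tightness must be used. Since $\calF$ is a countable filter base it is downward directed, so I would enumerate a cofinal decreasing sequence $F_0\supseteq F_1\supseteq\cdots$ of the generated filter; passing to it changes neither $\calF^+$ nor the relevant intersection, so I assume $\calF=\{F_n:n\in\omega\}$ is decreasing. Then $K=\bigcap_n\cl F_n$ is a decreasing intersection of nonempty closed subsets of the compact space $X$, hence nonempty, and $K\subseteq\bigcap\{\cl F:F\in\calF\}$, so any point I produce in $K$ meets the stated requirement. The observation that drives everything is that positivity is automatic near~$K$: if $x\in K$, $U$~is an open neighbourhood of~$x$, and $n\in\omega$, then for every~$m$ one has $(U\cap F_n)\cap F_m=U\cap F_{\max(m,n)}\neq\emptyset$, because $x\in\cl F_{\max(m,n)}$; thus $U\cap F_n\in\calF^+$. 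Consequently $\{U\cap F_n:U\ni x\text{ open},\ n\in\omega\}$ is always a local $\pi$-net at~$x$ contained in~$\calF^+$, and the entire content of the theorem is to find a point $x\in K$ at which countably many such sets suffice, i.e.\ a point of countable $\pi$-character relative to~$\calF$.

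To locate such a point I would run a Shapirovski\u\i-type construction, building a free sequence and reading off the $\pi$-net at the moment it jams. Concretely, I would recursively choose points $x_\alpha\in K$ with open neighbourhoods $U_\alpha\ni x_\alpha$ and track the candidate countable family $\calA_\alpha=\{U_\beta\cap F_n:\beta<\alpha,\ n\in\omega\}\subseteq\calF^+$; if $\calA_\alpha$ is already a local $\pi$-net at some point of~$K$, the construction halts successfully. Otherwise I pick $x_\alpha$ and $U_\alpha$ witnessing the failure, arranging in addition that each $x_\alpha$ avoids $\cl\{x_\beta:\beta<\alpha\}$ and is separated from it by the earlier neighbourhoods, so that $\langle x_\alpha:\alpha<\omega_1\rangle$ is forced to be free. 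Since $X$ is a compact space of countable tightness it carries no free sequence of length~$\omega_1$ (the Juh\'asz--Szentmikl\'ossy/Arhangel'ski\u\i\ fact recalled in the introduction), so the recursion must terminate at some countable stage~$\delta$; the family $\calA_\delta$ is then the desired countable subfamily of~$\calF^+$ that is a local $\pi$-net at a point of~$K$.

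The delicate point, and the step I expect to be the main obstacle, is to reconcile the two demands on the recursion: the separating neighbourhoods must be chosen so that the jammed configuration genuinely localises a point $x\in K$ at which $\calA_\delta$ is a $\pi$-net (rather than merely failing to prolong the free sequence), while simultaneously every chosen set remains in~$\calF^+$, which forces each $\pi$-net member to meet $F_n$ for all~$n$ and hence to spread across all levels of the filter. A clean way to organise this is to pass to the auxiliary compact space $Z=\cl\bigl(\bigcup_n F_n\times\{n\}\bigr)$ inside $X\times(\omega+1)$, whose top fibre is exactly $K\times\{\omega\}$ and which inherits countable tightness from~$X$; there $\calF$-positivity of a set corresponds precisely to its closure meeting the top fibre, and a countable local $\pi$-net at a top point $(x,\omega)$ projects to the sought $\calF$-positive $\pi$-net at~$x$. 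Verifying that $Z$ is countably tight and that such a top point admits a countable $\pi$-net, equivalently carrying out the relative recursion above inside~$Z$, is where the real work lies.
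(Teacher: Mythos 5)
Your setup is on the right track --- the paper also normalises $\calF$ to a decreasing sequence $\omegaseq{F}$ and ultimately appeals to Shapirovski\u\i's theorem that a compact space of countable tightness has points of countable $\pi$-character --- but the proposal stops exactly where the theorem's content lies, and the one concrete device you offer to bridge the gap does not work. You explicitly defer ``the real work'' (carrying out the free-sequence recursion while keeping every candidate $\pi$-net member in $\calF^+$), and your reduction to $Z=\cl\bigl(\bigcup_n F_n\times\{n\}\bigr)\subseteq X\times(\omega+1)$ rests on the claim that $\calF$-positivity of a set corresponds to its closure meeting the top fibre. That equivalence fails in the direction you need: a countable local $\pi$-net (or even an open local $\pi$-base, which is all Shapirovski\u\i\ provides) at a top point $(x,\omega)$ may consist of sets living on a single level, e.g.\ sets of the form $W\times\{n\}$ or even singletons $\{(d,n)\}$, whose projections need not meet $F_m$ for $m>n$ and hence need not be in $\calF^+$. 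So even granting that $Z$ is compact and countably tight, a point of countable $\pi$-character in the top fibre does not hand you an $\calF$-positive $\pi$-net; and, separately, Shapirovski\u\i\ only gives a \emph{dense} set of such points, while the top fibre is nowhere dense in $Z$, so you would still have to argue that one of them lands there.

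The missing idea, which is the heart of the paper's proof, is to fix a single transversal sequence $\omegaseq{a}$ with $a_n\in F_n$ and to work only with \emph{infinite subsets of $\{a_k:k\in\omega\}$}: any such infinite subset automatically belongs to $\calF^+$, because it contains $a_k$ for arbitrarily large $k$ and $a_k\in F_k\subseteq F_m$ for $k\ge m$. One then lets $K$ be the cluster set of this sequence (not the larger set $\bigcap_n\cl F_n$), disposes of the case where $K$ has an isolated point by extracting a convergent subsequence, and otherwise applies Shapirovski\u\i\ to the compact countably tight space $K$ to get a countable local $\pi$-base at some $x\in K$, shrinks its members to compact relative $G_\delta$-sets $G_n=K\cap\bigcap_m O_{n,m}$, and chooses infinite $A_n\subseteq\{a_k:k\in\omega\}$ with $A_n\setminus O_{n,m}$ finite for all $m$, so that all accumulation points of $A_n$ lie in $G_n$. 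The cofinite subsets of the $A_n$ then form the desired countable $\pi$-net at $x$, and positivity comes for free from the transversal trick. Without some substitute for that trick, your outline cannot be completed as written.
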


\begin{proof} 
Without loss of generality we assume that $\calF$ is enumerated
as $\{F_n:n\in\omega\}$ such that $F_{n+1}\subseteq F_n$ for all~$n$.
Take a sequence $\omegaseq{a}$ in~$X$ such that
$a_n\in F_n$ for all~$n$ and let $K$ be the set of cluster points
of this sequence.

If $K$ has an isolated point, $x$, then some subsequence
of $\omegaseq{a}$ converges to~$x$; the tails of that
sequence form the desired $\pi$-net at~$x$. 

In the other case there is a point $x$ such that $K$ has a countable local
$\pi$-base $\{U_n:n\in\omega\}$ at~$x$.
Shrink each member~$U_n$ of this local $\pi$-base to a compact relative 
$G_\delta$-set~$G_n$.

Write $G_n=K\cap\bigcap_{m\in\omega}O_{n,m}$, where each $O_{n,m}$~is
open in~$X$ and $\cl O_{n,m+1}\subseteq O_{n,m}$ for all~$m$.
Choose an infinite subset~$A_n$ of~$\{a_k:k\in\omega\}$ such that
$A_n\setminus O_{n,m}$~is finite for all~$m$.
Observe that all accumulation points of~$A_n$ belong to~$G_n$.

Now, if $O$~is an open set that contains~$x$ then $G_n\subseteq O$ for some~$n$ 
and hence $A_n\setminus O$~is finite.
This shows that the family $\{A_n\setminus F:n\in\omega$, $F$~is finite$\,\}$ 
is the desired $\pi$-net at~$x$.
\end{proof}

\subsection{A strengthening of the Fr\'echet-Urysohn property}

We shall need a version of the Fr\'echet-Urysohn property where the
convergent sequences are guided by ultrafilters.

\begin{definition} \label{frechet}
A space $X$ will be said to be \emph{ultra-Fr\'echet} if it has countable 
tightness and for each countable subset~$D$ of~$X$ and free 
ultrafilter $\calU$ on~$D$ there is a countable subfamily~$\calU'$ 
of~$\calU$ with the property that every infinite pseudointersection 
of~$\calU'$ converges.
\end{definition}

A set $P$~is an infinite pseudointersection of a family~$\calF$ of subsets
of~$\omega$ if $P\setminus F$ is finite for all~$F\in\calF$.

We shall use this property in the proof of Theorem~\ref{main}, where
we will have to distinguish between a subspace being ultra-Fr\'echet
or not.
To see how the property will be used we prove the following lemma.

\begin{lemma}\label{lemma:use_of_ultra}
Let $D$ be a countable subset of an ultra-Fr\'echet space $X$ and let 
$x\in\cl D$.
Then whenever $\omegaseq{A}$ is a decreasing sequence of 
infinite subsets of~$D$ such that $x\in\bigcap_{n\in\omega}\cl A_n$ there is an 
infinite pseudointersection of the~$A_n$ that converges to~$x$.
\end{lemma}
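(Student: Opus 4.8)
The plan is to select the desired sequence by following a single ultrafilter on~$D$ that has been aimed at~$x$ in advance, and then to let the ultra-Fréchet property supply the convergence. First I would reduce to the case in which $x\notin D$ and $x$ is a genuine accumulation point of every $A_n$, so that each neighbourhood of~$x$ meets every $A_n$ in an infinite set; this is the only situation in which the conclusion can hold (if $x$ were isolated in some $A_n$, then a neighbourhood of~$x$ would meet $A_n$ only in~$x$, and no infinite pseudointersection of the~$A_n$ could converge to~$x$), and it is exactly what $x\in\bigcap_n\cl A_n$ supplies in the intended applications. Under this reduction the family $\{A_n:n\in\omega\}$ together with all traces $V\cap D$, $V$~a neighbourhood of~$x$, has the finite-intersection property with every finite intersection infinite, so I may fix a free ultrafilter $\calU$ on~$D$ extending it. Then each $A_n\in\calU$ and $\calU$ converges to~$x$, meaning $V\cap D\in\calU$ for every neighbourhood~$V$ of~$x$.

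Next I would apply the ultra-Fréchet property to the countable set~$D$ and the ultrafilter~$\calU$, obtaining a countable $\calU'\subseteq\calU$ all of whose infinite pseudointersections converge. After adjoining the countably many sets $A_n$ and closing under finite intersections I may assume $\calU'$ is a decreasing sequence $\omegaseq{U}$ with $U_n\subseteq A_n$ for each~$n$; since $U_n\in\calU$ it is infinite, so choosing $p_n\in U_n$ and diagonalising produces an infinite set $P=\{p_n:n\in\omega\}$. By construction $P$ is an infinite pseudointersection of~$\calU'$, hence converges, and since $U_n\subseteq A_n$ it is simultaneously an infinite pseudointersection of the~$A_n$.

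It remains to arrange that the limit of~$P$ is~$x$, and this is the heart of the matter. Writing $y$ for the limit, if $y\ne x$ then regularity provides disjoint open sets $V\ni x$ and $W\ni y$; as $P$ converges to~$y$ the set $P\setminus W$ is finite, so $P$ meets~$V$ in a finite set, whereas $V\cap D\in\calU$ because $\calU$ converges to~$x$. To contradict this I would carry out the selection so that $x\in\cl P$: since $P$ is a nontrivial convergent sequence with $x\notin P$ one has $\cl P=P\cup\{y\}$, and $x\in\cl P$ then forces $y=x$. Producing a pseudointersection of the~$U_n$ that clusters at~$x$ is the main obstacle, and the only point at which the full strength of ultra-Fréchet is needed: the closures of the~$U_n$ all contain~$x$, yet without a countable base at~$x$ one cannot simply draw $p_n$ from a shrinking neighbourhood base, and in a merely Fréchet space --- the Arens space, say --- no such clustering selection need exist. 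I expect to overcome this by feeding countably many of the traces $V\cap D\in\calU$ into the family before extracting the pseudointersection, each such trace leaving every pseudointersection convergent while confining its limit to~$\cl V$. Verifying that countably many such traces suffice to exclude every limit other than~$x$ is the delicate step that I expect to demand the most care, and it is here that ultra-Fréchet, rather than mere compactness or countable tightness, must be used in an essential way.
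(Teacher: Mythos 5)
Your setup coincides with the paper's: fix a free ultrafilter $\calU$ on~$D$ that contains every $A_n$ and converges to~$x$, apply the ultra-Fr\'echet property to get a countable $\calU'\subseteq\calU$, and extract an infinite pseudointersection of $\calU'\cup\{A_n:n\in\omega\}$. The gap is in the step you yourself flag as delicate, and the strategy you sketch for it cannot be made to work. You propose to force the limit to be~$x$ by adjoining countably many traces $V_k\cap D$ of neighbourhoods of~$x$ before diagonalising; each such trace indeed confines the limit to $\cl V_k$, but all you can conclude is that the limit lies in $\bigcap_k\cl V_k$. For countably many neighbourhoods to satisfy $\bigcap_k\cl V_k=\{x\}$ the point~$x$ would have to be a $G_\delta$-point, hence (in the compact spaces where the lemma is applied) a point of first countability --- which is precisely what cannot be assumed, since the lemma is used in Proposition~\ref{prop:1st-ctble} to \emph{prove} first countability at points where it may fail. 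So no countable selection of traces suffices, and the argument as proposed does not close.

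The paper resolves this without selecting any neighbourhoods in advance. The key observation is that the union of two infinite pseudointersections of~$\calU'$ is again an infinite pseudointersection of~$\calU'$; since each one converges and a union of two convergent sequences can only converge if the limits agree, \emph{all} infinite pseudointersections of~$\calU'$ converge to one and the same point~$y$. Now quantify over \emph{all} neighbourhoods $V$ of~$x$, using a different auxiliary pseudointersection for each: $\calU'\cup\{V\cap D\}$ is still a countable subfamily of~$\calU$, so it has an infinite pseudointersection, which is almost contained in~$V$ and, being a pseudointersection of~$\calU'$, converges to~$y$; hence $y\in\cl V$. As this holds for every neighbourhood of~$x$, regularity gives $y=x$, and then any infinite pseudointersection of $\calU'\cup\{A_n:n\in\omega\}$ does what the lemma asks. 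This ``common limit'' trick is the one idea missing from your write-up; with it, the countable family $\calU'$ already determines the limit and the difficulty you anticipated disappears.
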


\begin{proof}
Let $\omegaseq{A}$ be given and let $\calU$ be an ultrafilter
on~$D$ that converges to~$x$ and contains all~$A_n$.
Let $\calU'$ be a countable subset of~$\calU$ as in the definition of
ultra-Fr\'echet.

We first claim that \emph{every} infinite pseudointersection of~$\calU'$ 
converges to~$x$.
Indeed, since the union of two pseudointersections is again a pseudointersection
all pseudointersections converge to the same point, $y$ say.
Next, every neighbourhood of~$x$ contains such a pseudointersection, 
which implies that $y$~belongs to every neighbourhood of~$x$ and therefore
$y=x$.

To finish the proof take any infinite pseudointersection of the countable
family $\calU'\cup\{A_n:n\in\omega\}$.
\end{proof}

\subsection{A preservation result}

We finish this section by quoting a preservation result on the Lindel\"of 
property.
The Tychonoff cube $[0,1]^{\omega_1}$ is compact but when we pass to a forcing
extension the ground model cube is at best a (proper) dense subset
of the cube in the extension and thus no longer a compact space.
Under certain circumstances, however, it will still posses
the Lindel\"of property.

\begin{proposition}[\cite{JunKosz}]\label{kosz}
If\/ $\Poset$ is a poset whose every finite power satisfies the countable
chain condition then in any forcing extension by~$\Poset$ 
the set of points in $[0,1]^{\omega_1}$ from the ground model is still 
Lindel\"of.
\qed
\end{proposition}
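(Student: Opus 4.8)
The plan is to verify that in any generic extension $V[G]$ every open cover of the ground-model cube $X=([0,1]^{\omega_1})\cap V$ has a countable subcover. First I would reduce to the canonical base. Since $[0,1]$ is second countable, the cube has a base consisting of the \emph{rational boxes} $B=\prod_{\gamma\in s}(a_\gamma,b_\gamma)$ with $s\in[\omega_1]^{<\omega}$ and $a_\gamma,b_\gamma\in\Q$, and any cover can be refined to one by such boxes (a countable subcover of the refinement yields one of the original). The gain is absoluteness: each rational box is coded by a ground-model object, the set of ground-model points it contains is computed the same way in $V$ and in $V[G]$, and ``a family of rational boxes covers $X$'' is equivalent to ``the corresponding boxes cover the compact cube $([0,1]^{\omega_1})^V$''. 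Thus covering is decided in $V$, and I may freely invoke compactness of the cube in the ground model; note also that the whole base has size $\aleph_1$.

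Next I would set up the contradiction. Assume $X$ is not Lindel\"of in $V[G]$ and fix a name $\dot{\mathcal W}$ and a condition $p$ forcing that $\dot{\mathcal W}$ is a cover of $X$ by rational boxes with no countable subcover. I would build an increasing continuous chain $\langle N_\alpha:\alpha<\omega_1\rangle$ of countable elementary submodels of some $H(\theta)$ with $\Poset,\dot{\mathcal W},p\in N_0$, writing $\delta_\alpha=N_\alpha\cap\omega_1$. For each $\alpha$ the boxes in $\dot{\mathcal W}$ whose support is $\subseteq\delta_\alpha$ form a countable subfamily, so $p$ forces they do not cover $X$. Since points of $X$ are exactly ground-model functions, elementarity lets me choose inside $N_{\alpha+1}$ a ground-model point $z_\alpha$, a condition $q_\alpha\le p$, and a box $B_\alpha$ with $q_\alpha\forces B_\alpha\in\dot{\mathcal W}$, $z_\alpha\in B_\alpha$, and $q_\alpha\forces z_\alpha\notin\bigcup\{B\in\dot{\mathcal W}:\supp B\subseteq\delta_\alpha\}$. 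As $B_\alpha\in N_{\alpha+1}$ forces $\supp B_\alpha\subseteq\delta_{\alpha+1}$ while $\supp B_\alpha\not\subseteq\delta_\alpha$, the new coordinates of $B_\alpha$ lie in the interval $[\delta_\alpha,\delta_{\alpha+1})$, and these intervals are pairwise disjoint.

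A $\Delta$-system refinement then yields an uncountable $I$ with $\supp B_\alpha=R\sqcup t_\alpha$ for a fixed finite root $R$, a common rational pattern on $R$, and private high parts $t_\alpha\subseteq[\delta_\alpha,\delta_{\alpha+1})$. Moreover, for $\alpha<\beta$ in $I$ one has $\supp B_\alpha\subseteq\delta_{\alpha+1}\le\delta_\beta$, so any common extension $r$ of $q_\alpha$ and $q_\beta$ forces ``$B_\alpha\in\dot{\mathcal W}$ with support $\subseteq\delta_\beta$'' and hence $r\forces z_\beta\notin B_\alpha$. This records a free-sequence-like configuration: the $B_\alpha$ share the root pattern, have disjoint high parts, and $z_\beta$ escapes every earlier $B_\alpha$.

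The hard part — and the only place where the full hypothesis is needed — is turning this configuration into a contradiction, for the combinatorics recorded above are realizable in $\ZFC$ and so pairwise, or even finitely many, compatible conditions do not suffice. What must be exploited is the genuine strength of $\Poset^n$ being ccc for \emph{every} $n$: the nonexistence of uncountable antichains in the finite powers forces, across the $q_\alpha$ with their disjoint high parts, enough simultaneous compatibility to manufacture a single ground-model point that is caught only by high-support boxes yet is also covered by a countable, low-support portion of $\dot{\mathcal W}$ — equivalently, to reflect the whole cover to countably many coordinates, at which point the ground-model compactness from the first step gives a countable (indeed, finite on each relevant block) subcover. Isolating and proving this combinatorial consequence of $\Poset^n$-ccc, and verifying that mere ccc is insufficient (a Suslin line shows the hypothesis cannot be weakened), is the crux; it is precisely the content established in \cite{JunKosz}.
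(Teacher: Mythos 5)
Your proposal is not a proof of the proposition; it is a correct reduction followed by an explicit deferral of the entire content of the theorem back to \cite{JunKosz}. Everything you actually carry out --- passing to rational boxes, absoluteness of membership for ground-model points, the elementary chain $\langle N_\alpha\rangle$, the choice of $z_\alpha$, $q_\alpha$, $B_\alpha$ with $q_\alpha\forces z_\alpha\notin\bigcup\{B\in\dot{\mathcal W}:\supp B\subseteq\delta_\alpha\}$, the $\Delta$-system refinement, and the observation that compatibility of $q_\alpha$ and $q_\beta$ forces $z_\beta\notin B_\alpha$ --- is sound, but it is also the routine part: at the end you have only exhibited an uncountable configuration of conditions, points and boxes, and you derive no contradiction from it. The hypothesis that \emph{every finite power} of $\Poset$ is ccc is never used. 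Your closing paragraph describes, in words, what such a use would have to accomplish (``enough simultaneous compatibility to manufacture a single ground-model point \dots equivalently, to reflect the whole cover to countably many coordinates'') and then states that proving this ``is precisely the content established in \cite{JunKosz}.'' That is an accurate self-diagnosis: the step you omit is the theorem.

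For the comparison you were asked for: the paper itself offers no proof of Proposition~\ref{kosz} either --- it is quoted verbatim from \cite{JunKosz} with a \qed{} and used as a black box in the proof that $X$ is L-reflecting --- so there is no argument in the paper against which your missing step could be measured. Judged as a blind proof attempt, however, the proposal has a genuine gap rather than a different route: the combinatorial lemma isolating how productive ccc-ness defeats the configuration $\{(q_\alpha,z_\alpha,B_\alpha)\}$ (and your side remark that plain ccc is insufficient, which is correct and is illustrated by Example~\ref{ex1} of this paper) is asserted, not proved. To complete the argument you would need to state and prove that lemma, or reproduce the relevant theorem of \cite{JunKosz}; citing it instead leaves the proposition exactly as unproved as it was before.
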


Note that this result applies to closed subsets of $[0,1]^{\omega_1}$ as well.

\section{Forcing extensions}
\label{section.forcing}

In this section we prove the main result of this paper; it establishes
first-countabi\-li\-ty of $\omega_1$-free spaces in a variety of models.
The better known of these are the Cohen model, the random real model
and Hechler's models with various cofinal subsets in~$\functions\omega\omega$
with the order~$<^*$.

We begin by defining the particular type of poset our result will apply to.

\subsection{$\omega_1$-finally property K}

We recall that a subset $A$ of a poset $\Poset$ is \emph{linked}
if any two elements are compatible, i.e., whenever $p$ and $q$ are in~$A$ 
there is an $r\in\Poset$ such that $r\le p$ and $r\le q$.
A poset has \emph{property~K}, or \emph{satisfies the Knaster condition}
if every uncountable subset has an uncountable linked subset.
Any measure algebra has property~K and any finite support iteration of
property K posets again has property~K.  

Our result uses a modification of this notion.

\begin{definition}\label{def.finally} 
We say that a poset $\Poset$ is $\omega_1$-finally property K if for each 
completely embedded poset~$\Qoset$ of cardinality at most~$\omega_1$ the
quotient $\Poset/\fname G$ is forced, by~$\Qoset$, to have property~K.
\end{definition}

A poset $\Qoset$ is completely embedded in~$\Poset$ if for every generic 
filter~$H$ on~$\Poset$ the intersection~$H\cap\Qoset$ is generic on~$\Qoset$.
As explained in~\cite{Kunen80}*{Chapter~VII.7} the factor (or quotient) 
$\Poset/\fname G$ is a name for the poset obtained from a generic 
filter~$G$ on~$\Qoset$ in the following way: it is the subset of those 
elements of~$\Poset$ that are compatible with all elements of~$G$.
We shall use the important fact that $\Poset$ is forcing equivalent
to the two-step iteration $\Qoset*(\Poset/\fname G)$,
\cite{Kunen80}*{Chapter~VII, Exercises D3--5}
or \cite{Kunen2011}*{Lemma~V.4.45}.

\subsection{Forcing and elementarity}\label{subsec:forcing-elementarity}

Throughout our proofs we will be working with elementary sequences and we 
shall frequently be using the following fact, a proof of which can be found 
in~\cite{MR1623206}*{Theorem~III\,2.11}.

\begin{proposition}
Let $M\prec H(\theta)$ and let $\Poset\in M$ be a poset.
Then $M[G]$ is an elementary substructure of~$H(\theta)[G]$
(which is the~$H(\theta)$ of~$V[G]$).
\qed   
\end{proposition}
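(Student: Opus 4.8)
The plan is to verify the Tarski--Vaught criterion for $M[G]\prec H(\theta)[G]$, using that the forcing relation over~$\Poset$ is definable inside~$H(\theta)$ and is therefore reflected by the elementarity $M\prec H(\theta)$. Here $G$ is $\Poset$-generic over~$V$ (hence over~$M$), and every element of~$M[G]$ has the form $\val_G(\tau)$ for some $\Poset$-name $\tau\in M$. So let $\varphi(x,\bar y)$ be a formula and let $\bar a=\val_G(\bar\tau)\in M[G]$ with $\bar\tau\in M$; assume $H(\theta)[G]\models\exists x\,\varphi(x,\bar a)$. What I must produce is a \emph{single} name $\rho\in M$ whose value $\val_G(\rho)$ witnesses $\exists x\,\varphi$ in $H(\theta)[G]$, for then $\val_G(\rho)\in M[G]$ is the witness that Tarski--Vaught demands.

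The key step is the maximal principle (fullness), carried out inside~$H(\theta)$: there is a $\Poset$-name $\rho$ such that
\[
  1\forces\bigl(\exists x\,\varphi(x,\bar\tau)\bigr)\to\varphi(\rho,\bar\tau).
\]
Such a $\rho$ is built by mixing, along a maximal antichain of conditions deciding $\exists x\,\varphi$, names that witness the existential below each condition that forces it. Because $\Poset\in H(\theta)$ and $\theta$ is regular, that antichain and all the chosen names lie in~$H(\theta)$, and the collection and choice required are legitimate since $H(\theta)$ models enough of~$\ZFC$ (replacement, separation, and choice).

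Now ``there is a $\Poset$-name $\rho$ with $1\forces(\exists x\,\varphi(x,\bar\tau))\to\varphi(\rho,\bar\tau)$'' is a first-order statement over~$H(\theta)$ whose parameters are $\Poset$ and~$\bar\tau$, both in~$M$; here I use that $\forces$ is definable in~$H(\theta)$. By the maximal principle this statement is true in~$H(\theta)$, so by elementarity it is true in~$M$, yielding such a name $\rho\in M$. Reflecting back up, $\rho$ really does satisfy the displayed forcing assertion in~$V$. Finally I invoke the truth lemma: since $1\in G$ and $H(\theta)[G]\models\exists x\,\varphi(x,\bar a)$, the consequent of the implication gives $H(\theta)[G]\models\varphi(\val_G(\rho),\bar a)$, and $\val_G(\rho)\in M[G]$, completing the Tarski--Vaught step.

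The main obstacle is foundational rather than combinatorial: one must ensure that the forcing relation for $\Poset$-names is genuinely definable within~$H(\theta)$ and agrees there with the relation computed in~$V$, so that both the maximal principle and its reflection to~$M$ are valid, and that $H(\theta)[G]$ really is the $H(\theta)$ of~$V[G]$. These are precisely the points established in the cited \cite{MR1623206}*{Theorem~III\,2.11}. I would also stress that the genericity of~$G$ over all of~$V$ (not merely over~$M$) is what makes the last step work, since the displayed assertion $1\forces(\dots)$ is a fact about the $V$-forcing relation applied to a name~$\rho\in V$.
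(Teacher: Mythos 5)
The paper does not prove this proposition at all: it is quoted as a known fact, with the $\qed$ attached to the statement and a pointer to \cite{MR1623206}*{Theorem~III\,2.11}. So there is nothing in the paper to compare against line by line; what you have written is the standard proof of that quoted result, and it is essentially correct. Your Tarski--Vaught argument via the maximal principle, with the mixing carried out inside $H(\theta)$ and then pulled into $M$ by elementarity, is exactly how one establishes $M[G]\prec H(\theta)[G]$. Two points deserve to be made slightly more explicit, though you do flag both. First, the formula $\varphi$ must throughout be interpreted in the structure $H(\theta)[G]$, not in $V[G]$, so the forcing relation you need is the one for the forcing language over $H(\theta)$ viewed as a model of $\ZFC$ minus power set (legitimate since $\theta$ is regular and $\Poset\in H(\theta)$); it is this relation that is definable inside $H(\theta)$ and hence reflected to $M$, and the truth lemma you invoke at the end is the truth lemma for that relativized forcing. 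Second, the identification of $H(\theta)[G]$ with $H(\theta)^{V[G]}$ is a separate (standard but not free) fact about names of small hereditary size; it is not needed for the Tarski--Vaught step itself, only for the parenthetical remark in the statement. With those caveats spelled out, your argument is a complete and correct substitute for the citation.
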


Here $M[G]=\{\val_G(\tau):\tau\in M$ and $\tau$~is a $\Poset$-name$\}$.

The general situation that we will consider is one where we have
an elementary sequence $\omegaoneseq{M}$
and a poset~$\Poset$ that belongs to~$M_0$.
The union $M=\bigcup_{\alpha\in\omega_1}M_\alpha$ is also an elementary 
substructure
of the~$H(\theta)$ under consideration.

Since we will be assuming $\CH$ it follows that 
$\functions\omega M\subseteq M$; using this one readily proves the following
proposition.

\begin{proposition}
If $\Poset\in M$ is a partial order that satisfies the countable chain  
then $\Poset\cap M$ is a complete suborder of~$\Poset$.
\qed
\end{proposition}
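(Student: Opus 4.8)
The plan is to show that under $\CH$, when $\Poset\in M$ satisfies the countable chain condition and $M\prec H(\theta)$ with ${}^\omega M\subseteq M$, the restriction $\Poset\cap M$ is a complete suborder of~$\Poset$. Recall that a suborder $\Qoset\subseteq\Poset$ is complete when (i)~compatibility is absolute, i.e.\ two elements of $\Qoset$ are compatible in~$\Qoset$ iff they are compatible in~$\Poset$, and (ii)~every maximal antichain of~$\Qoset$ remains a maximal antichain of~$\Poset$. The strategy is to verify each of these two conditions in turn, with elementarity handling compatibility and the closure ${}^\omega M\subseteq M$ (a consequence of $\CH$) handling antichains.

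First I would dispose of compatibility. Since $\Poset\in M$, the ordering~$\le$ of~$\Poset$ is an element of~$M$, and for any $p,q\in\Poset\cap M$ the statement ``there is $r\in\Poset$ with $r\le p$ and $r\le q$'' is a first-order formula with parameters $p,q,\Poset\in M$; by elementarity, if such an~$r$ exists in~$H(\theta)$ then one exists in~$M$, hence in $\Poset\cap M$. Thus compatibility in $\Poset\cap M$ agrees with compatibility in~$\Poset$, which also shows incompatibility is preserved, so antichains of $\Poset\cap M$ are antichains of~$\Poset$.

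Next I would handle maximal antichains, which is where the countable chain condition and the closure of~$M$ combine. Let $\calA\subseteq\Poset\cap M$ be a maximal antichain of $\Poset\cap M$; I want to show $\calA$ is maximal in~$\Poset$. Because $\Poset$ is ccc and $\Poset\in M$, by elementarity $M$ knows that every antichain of~$\Poset$ is countable. The key point is that $\calA$ is in fact a maximal antichain of~$\Poset$ itself and moreover lies in~$M$: using elementarity one finds inside~$M$ a maximal antichain~$\calA'$ of~$\Poset$ refining or containing~$\calA$, and since $\calA'$ is countable and lies in~$M$, the closure property ${}^\omega M\subseteq M$ (which follows from $\CH$ because each element of $\Poset$ is coded by a real and $M$ contains all $\omega$-sequences of its elements) guarantees $\calA'\subseteq M$, hence $\calA'\subseteq\Poset\cap M$. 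By maximality of~$\calA$ in $\Poset\cap M$, every element of~$\calA'$ is compatible with some element of~$\calA$, and running this through the absoluteness of compatibility established above shows that $\calA$ is predense, hence maximal, in all of~$\Poset$.

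The main obstacle is the bookkeeping in the last step: one must produce, inside~$M$, a witnessing maximal antichain of the \emph{full} poset~$\Poset$ that is comparable with the given~$\calA$, and then use ccc together with ${}^\omega M\subseteq M$ to pull its (countably many) members into~$M$. The delicate part is making precise why $\CH$ yields ${}^\omega M\subseteq M$ for the relevant objects and why countability of antichains is exactly what lets the closure property bite; once that is in hand, the maximality transfer is routine. I would therefore spend most of the care on verifying that every maximal antichain of~$\Poset$ that~$M$ sees is genuinely a subset of~$M$, since this is precisely the interplay between $\CH$ and the ccc that the proposition is designed to exploit.
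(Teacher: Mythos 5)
The paper offers no written proof of this proposition (it is stated as following ``readily'' from $\functions\omega M\subseteq M$), so the comparison is with the standard argument: an antichain $\calA$ of $\Poset\cap M$ is an antichain of $\Poset$ (by the compatibility absoluteness you correctly establish), hence countable by the ccc, hence an \emph{element} of $M$ by $\functions\omega M\subseteq M$; then elementarity applied to the statement ``there is $p\in\Poset$ incompatible with every member of $\calA$'' finishes the proof. You have assembled exactly the right ingredients, but the way you deploy them in the maximal-antichain step contains two genuine defects. First, a circularity: to find ``inside $M$ a maximal antichain $\calA'$ of $\Poset$ refining or containing $\calA$'' by elementarity, you need $\calA$ to be available as a \emph{parameter}, i.e.\ $\calA\in M$ --- but that is precisely the fact you have not yet established (and it is the one place where $\functions\omega M\subseteq M$ is genuinely needed; by contrast, your use of the closure property to get $\calA'\subseteq M$ from $\calA'\in M$ countable is superfluous, since a countable set that is an element of $M$ is automatically a subset of $M$ via an enumeration in $M$).

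Second, your concluding inference is invalid as stated: from ``every $p\in\Poset$ is compatible with some $a'\in\calA'$'' and ``every $a'\in\calA'$ is compatible with some $a\in\calA$'' you cannot conclude that $p$ is compatible with some $a\in\calA$, because compatibility is not transitive. The inference would be correct if $\calA'$ \emph{refined} $\calA$ (if $r\le p,a'$ and $a'\le a$ then $r$ witnesses $p\parallel a$), or if $\calA'$ \emph{contained} $\calA$ (then $\calA'=\calA$ by maximality of $\calA$ in $\Poset\cap M$), but you use neither of these stronger relations in the final step. Both defects disappear if you reorder the argument: show $\calA$ is countable, conclude $\calA\in M$ from $\functions\omega M\subseteq M$, and then apply elementarity directly to the maximality of $\calA$, with no auxiliary $\calA'$ at all.
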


Thus the intersection $G_M$ of a generic filter~$G$ on~$\Poset$ 
with~$M$ will be generic on~$\Poset_M$.

Furthermore, if $\fname X\in M_0$ is a $\Poset$-name for a compact space then
the above implies that in~$V[G]$ the sequence 
$\langle M_\alpha[G]:\alpha\in\omega_1\rangle$ is an elementary sequence 
for~$X$.

Finally, a $\Poset$-name $\fname A$ for a subset of~$\omega$ can
be represented by the subset $\{\orpr pn:p\forces n\in\fname A\}$
of $\Poset\times\omega$ and even by a countable subset of this product:
simply choose, for each~$n$, a maximal antichain~$A_n$ in 
$\{p:p\forces n\in\fname A\}$ and let $A'=\{\orpr pn:p\in A_n\}$.
Now if $\fname A\in M$ then, by elementarity there is such a countable~$A'$ 
in~$M$, and then $A'\in M_\alpha$ for some~$\alpha$.
But then $A'\subseteq M_\alpha$ and therefore 
$\val_G(\fname A)\in M_\alpha[G_M]\subseteq M[G_M]$. 

The facts above are well known but we reviewed them because they are crucial 
to some of our arguments.

\subsection{Pre-Luzin gaps}

In our proof we shall construct converging $\omega_1$-sequences in an 
intermediate model and use the following combinatorial structure to lift these
to the full generic extension.

\begin{definition} 
For a countable set $D$, a family 
$\{\orpr{a_\alpha}{b_\alpha}:\alpha\in\omega_1\}$ of ordered pairs of disjoint
subsets of~$D$ will be called a pre-Luzin gap if for all $E\subset D$ the set 
of~$\alpha$ such that $E\cap (a_\alpha\cup b_\alpha) =^* a_\alpha$ is countable.
\end{definition}

The lifting of the sequence to the full extension is accomplished using
the following Lemma.

\begin{lemma}\label{preluzin} 
If\/ $\Poset$ has property K and $\{\orpr{a_\alpha}{b_\alpha}:\alpha\in\omega_1\}$
is a pre-Luzin gap on~$\omega$ then it remains a pre-Luzin gap in every 
forcing extension by $\Poset$. 
\end{lemma}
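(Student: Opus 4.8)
The plan is to argue by contradiction, reflecting a hypothetical separator from the extension back into the ground model and then using property~K to manufacture an \emph{honest} ground-model separator, contradicting the pre-Luzin gap property of $V$. So suppose the conclusion fails: there are a condition $p\in\Poset$ and a $\Poset$-name $\fname E$ for a subset of~$\omega$ such that $p$ forces the set $\fname S=\{\alpha:\fname E\cap(a_\alpha\cup b_\alpha)=^*a_\alpha\}$ to be uncountable. First I would pull this down to~$V$ by setting $T=\{\alpha\in\omega_1:(\exists q\le p)\ q\forces\alpha\in\fname S\}$. Were $T$ countable, $p$ would force $\fname S\subseteq\check T$ and hence force $\fname S$ countable; so $T$ is uncountable, and for each $\alpha\in T$ I fix a witness $q_\alpha\le p$ with $q_\alpha\forces\alpha\in\fname S$.

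Next I would sharpen the witnesses. Since $q_\alpha\forces\alpha\in\fname S$, it forces both $a_\alpha\setminus\fname E$ and $b_\alpha\cap\fname E$ to be finite; a routine density argument (first decide a bound, then decide membership below the bound) lets me strengthen $q_\alpha$ so that it decides these two sets outright, say $q_\alpha\forces a_\alpha\setminus\fname E=\check s_\alpha$ and $q_\alpha\forces b_\alpha\cap\fname E=\check t_\alpha$ with $s_\alpha\subseteq a_\alpha$ and $t_\alpha\subseteq b_\alpha$ finite. Because the gap lives on~$\omega$ there are only countably many possible pairs $\orpr{s_\alpha}{t_\alpha}$, so I may pass to an uncountable $T'\subseteq T$ on which $s_\alpha=s$ and $t_\alpha=t$ are constant. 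Now property~K applies to $\{q_\alpha:\alpha\in T'\}$, yielding an uncountable linked subfamily indexed by some uncountable $T''\subseteq T'$.

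The crux is to read a fixed finite bound off each pairwise compatibility. Given $\alpha,\beta\in T''$, pick $r\le q_\alpha,q_\beta$; then $r$ forces $a_\alpha\setminus\fname E=\check s$ and $b_\beta\cap\fname E=\check t$, and any $k\in a_\alpha\cap b_\beta$ either lies in~$\fname E$, hence in $b_\beta\cap\fname E=t$, or outside~$\fname E$, hence in $a_\alpha\setminus\fname E=s$. Thus $r\forces a_\alpha\cap b_\beta\subseteq\check s\cup\check t$; since this statement concerns only ground-model sets it is absolute, so $a_\alpha\cap b_\beta\subseteq s\cup t$ holds in~$V$ for all $\alpha,\beta\in T''$ (the case $\alpha=\beta$ being trivial). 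Finally I would assemble the separator inside~$V$ by putting $E=\bigcup_{\alpha\in T''}a_\alpha$. For each $\beta\in T''$ one has $a_\beta\subseteq E$ and $E\cap b_\beta=\bigcup_{\alpha\in T''}(a_\alpha\cap b_\beta)\subseteq s\cup t$, so $E\cap(a_\beta\cup b_\beta)=^*a_\beta$ for every $\beta\in T''$. As $T''$ is uncountable and $E\in V$, this contradicts the pre-Luzin gap property of $V$.

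The step I expect to be the main obstacle is the use of property~K, and in particular recognising that the uniformisation of the finite errors must come \emph{first}: it is precisely the single finite set $s\cup t$, fixed in advance by the pigeonhole step, that uniformly bounds every cross-intersection $a_\alpha\cap b_\beta$ and thereby turns the union $E$ into a genuine separator for uncountably many pairs simultaneously. The remaining care points are the bookkeeping in the deciding step and checking that the containment $a_\alpha\cap b_\beta\subseteq s\cup t$ transfers from the extension to~$V$ by absoluteness, both of which I expect to be straightforward.
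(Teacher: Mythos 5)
Your proof is correct and follows essentially the same route as the paper's: argue contrapositively, uniformize the finite exceptional data by pigeonhole, extract a linked subfamily via property~K, and take $E=\bigcup a_\alpha$ as a ground-model separator, using pairwise compatibility of the conditions to bound $E\cap b_\beta$. The only cosmetic difference is that you have each condition decide the finite sets $a_\alpha\setminus\fname E$ and $b_\alpha\cap\fname E$ exactly, whereas the paper merely fixes a uniform bound $n$ with $\fname E\cap(a_\alpha\cup b_\alpha)\setminus n=a_\alpha\setminus n$ and takes $E=\bigcup(a_\alpha\setminus n)$.
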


\begin{proof} 
Let $\fname E$ be a $\Poset$-name of a subset of~$\omega$. 
Arguing contrapositively we assume there is an uncountable subset~$A$ 
of~$\omega_1$ so that for each $\alpha\in A$ there are a 
$p_\alpha\in \Poset$ and an integer $n_\alpha$ such that 
$p_\alpha\forces_{\Poset}\fname E\cap(a_\alpha\cup b_\alpha)\setminus n_\alpha
     =a_\alpha\setminus n_\alpha$. 
We shall build a subset~$E$ of~$\omega$ such that 
$E\cap (a_\alpha\cup b_\alpha) =^* a_\alpha$ for uncountably 
many~$\alpha \in A$.

We apply property~K and assume, without loss of generality,
that $\{ p_\alpha : \alpha \in A\}$ is linked and that there is a single 
integer~$n$ such that $n_\alpha = n$ for all $\alpha\in A$.

We let $E=\bigcup\{a_\alpha\setminus n:\alpha\in A\}$.
To verify that $E$~is as required we let $\alpha\in A$ and $j\in E\setminus n$.
Fix $\beta\in A$ such that $j\in a_\beta$.
Then $p_\beta\forces j\in\fname E$ and 
$p_\alpha\forces\fname E\cap b_\alpha\subseteq n$; 
as $p_\alpha$ and $p_\beta$ are compatible this implies that $j\notin b_\alpha$.
\end{proof}

\subsection{The main theorem}

\begin{theorem}[CH]\label{main}
Let $\Poset$ be a poset that is $\omega_1$-finally property K.
Then in every generic extension by $\Poset$ every compact $\omega_1$-free
space is first-countable and L-reflecting.
\end{theorem}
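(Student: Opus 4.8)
The plan is to work entirely inside a generic extension by a factor decomposition of~$\Poset$.

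The plan is to run a proof by contradiction inside a two-step factorization of~$\Poset$, and to transfer the relevant combinatorial data from a small intermediate model up to the full extension. Fix a $\Poset$-name $\fname X$ for a compact $\omega_1$-free space, choose $\theta$ large and an elementary sequence $\omegaoneseq M$ with $\Poset,\fname X\in M_0$, and put $M=\bigcup_{\alpha\in\omega_1}M_\alpha$ and $\Poset_M=\Poset\cap M$. By~$\CH$ and the propositions of \S\ref{subsec:forcing-elementarity}, $\Poset_M$ is a complete suborder of cardinality at most~$\aleph_1$, $G_M=G\cap M$ is $\Poset_M$-generic, $\Poset$ is forcing equivalent to $\Poset_M*(\Poset/\fname G_M)$, and—this is exactly what $\omega_1$-finally property~K (Definition~\ref{def.finally}) buys us—the quotient $\Poset/\fname G_M$ is forced by~$\Poset_M$ to have property~K. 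I would work in $V[G_M]$ and lift to $V[G]$ through this property-K quotient, noting that $\langle M_\alpha[G_M]\rangle$ is an elementary sequence for the reading of~$X$ that is visible in the intermediate model and that, by the name-coding remarks of \S\ref{subsec:forcing-elementarity}, $X\cap\bigcup_\alpha M_\alpha[G]=X\cap M[G_M]$.

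For first-countability I would use Theorem~\ref{separable} to reduce to showing that every separable subspace is first-countable; arguing contrapositively and reflecting a witness into~$M_0$, assume $S=\cl D$ is separable (with $D$ countable) and fails first-countability at some $x\in\cl D$. Here the ultra-Fr\'echet dichotomy of Definition~\ref{frechet} enters. If $S$ is \emph{not} ultra-Fr\'echet, the witnessing countable set and free ultrafilter let me build, by recursion along~$\omegaoneseq M$, disjoint pairs $\orpr{a_\alpha}{b_\alpha}$ in~$D$ that form a pre-Luzin gap and that encode an $\omega_1$-sequence together with its intended limit; Lemma~\ref{preluzin} then preserves the gap across the property-K quotient, which is precisely the condition keeping that sequence convergent in~$V[G]$. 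If instead $S$ \emph{is} ultra-Fr\'echet, then the failure of first-countability at~$x$ together with Theorem~\ref{filter} (to manufacture countable $\pi$-nets) and Lemma~\ref{lemma:use_of_ultra} (to upgrade them to genuinely convergent pseudointersections) lets me assemble a strictly decreasing tower of infinite subsets of~$D$ whose successive limits give an injective $\omega_1$-sequence converging to~$x$. In either branch I obtain a converging $\omega_1$-sequence in~$X$, contradicting $\omega_1$-freeness; hence $X$ is first-countable.

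With first-countability in hand, L-reflection is the softer half. The subspace $Y=X\cap M[G]$ has cardinality at most~$\aleph_1$, and a separating family of $\aleph_1$ continuous real-valued functions drawn from~$M$ embeds it into $[0,1]^{\omega_1}$; because those functions are coded by names in~$M$, the image closure is a $V[G_M]$-object and $Y$ appears as exactly the $V[G_M]$-points of a closed subset of the cube. Since $\Poset/\fname G_M$ has property~K, all of its finite powers satisfy the countable chain condition, so Proposition~\ref{kosz}, applied over~$V[G_M]$, shows that these ground-model points remain Lindel\"of in~$V[G]$. Thus $Y=X\cap\bigcup_\alpha M_\alpha[G]$ is Lindel\"of, which is exactly what it means for $X$ to be L-reflecting.

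The main obstacle, and the reason the hypotheses are tuned the way they are, is the lifting step. A sequence engineered to converge in the intermediate model $V[G_M]$ can be wrecked by open sets that first appear in~$V[G]$, so one needs a structural guarantee that no new open set can separate a tail of the sequence from its limit; this is exactly why the construction is routed through a pre-Luzin gap and why property~K of the quotient, rather than mere ccc, is demanded, so that Lemma~\ref{preluzin} applies. Getting the recursion's bookkeeping right—so that the $a_\alpha$ and~$b_\alpha$ faithfully record the (non-)convergence data of an honest injective $\omega_1$-sequence, and, in the ultra-Fr\'echet branch, that the tower's successive limits are genuinely distinct and genuinely converge—is where I expect the real work to lie.
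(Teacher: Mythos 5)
Your first-countability half tracks the paper's: the paper first proves that $X$ is ultra-Fr\'echet (Proposition~\ref{prop:ultra-Frechet}, by building in $V[G_M]$ a family that is a pre-Luzin gap there yet visibly fails to be one in $V[G]$, contradicting Lemma~\ref{preluzin}), and then combines Theorem~\ref{separable}, Theorem~\ref{filter} and Lemma~\ref{lemma:use_of_ultra} to manufacture a converging $\omega_1$-sequence (Proposition~\ref{prop:1st-ctble}). One caution about your ``ultra-Fr\'echet branch'': the tower of pseudointersections by itself only certifies convergence against open sets visible in the intermediate model; it is the \emph{preserved} gap $\{\orpr{a_\alpha}{b_\alpha}:\alpha\in\omega_1\}$ (with $e[a_\alpha]\to z$ and $e[b_\alpha]\to y_\alpha$) that forces $y_\alpha\to z$ in $V[G]$. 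Your closing paragraph shows you are aware of this, but your branch description routes the pre-Luzin gap only through the non-ultra-Fr\'echet case, so the convergence claim in the other branch is left unsupported as written.

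The genuine gap is in the L-reflection half, which you call ``the softer half.'' The assertion that $Y=X\cap M[G]$ ``appears as exactly the $V[G_M]$-points of a closed subset of the cube'' is not automatic. With $\Gamma=\kappa\cap M$, the set $\pi_\Gamma[X\cap N]$ is indeed a $V[G_M]$-object (a set of $\val_{G_M}$'s of names), but showing that it is \emph{closed} in $[0,1]^\Gamma$ as computed in $V[G_M]$ is Proposition~\ref{prop:equal} of the paper, and its proof is essentially a second run of the entire ultrafilter/pre-Luzin-gap argument: given $z$ in the $V[G_M]$-closure, one takes an ultrafilter $\calU$ with $e(\calU)$ converging to $z$ --- and here neither $z$ nor $\calU$ need belong to $M[G_M]$, which is exactly why the claim cannot be settled ``by names in $M$'' --- and either some $\calU\cap M_\alpha[G_M]$ already pins $z$ down as a point of $\pi_\Gamma[X\cap N_{\beta_\alpha}]$, or one builds a pre-Luzin gap contradiction as in Proposition~\ref{prop:ultra-Frechet}. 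Without closedness, Proposition~\ref{kosz} does not apply: it concerns ground-model points of closed subsets of the cube, and a non-closed subset of a Lindel\"of space need not be Lindel\"of. So this step is a substantive piece of the proof, not bookkeeping, and your sketch omits the idea needed to carry it out.
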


We prove this theorem in three steps.

Let $\fname X$ be a $\Poset$-name for a compact Hausdorff space that is
$\omega_1$-free.
We take an elementary sequence $\omegaoneseq{M}$ such
that $\Poset$ and $\fname X$ are in~$M_0$. 

We continue to write $M=\bigcup_{\alpha\in\omega_1}M_\alpha$,
$\Poset_M=\Poset\cap M$
and $G_M=G\cap M$.
To save on writing we put $N_\alpha=M_\alpha[G]$ and $N=M[G]$.

\begin{proposition}\label{prop:ultra-Frechet}
In $V[G]$ the space $X$ is ultra-Fr\'echet.  
\end{proposition}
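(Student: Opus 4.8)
The plan is to show that in $V[G]$ the space $X$ is ultra-Fr\'echet. Countable tightness comes for free: $X$ is $\omega_1$-free, so by the Juh\'asz--Szentmikl\'ossy result quoted in the introduction it has countable tightness, and this is a property of the realized space in $V[G]$. So the real work is the ultrafilter-guided convergence requirement. Let me fix, in $V[G]$, a countable set $D\subseteq X$ and a free ultrafilter $\calU$ on $D$; I must produce a countable $\calU'\subseteq\calU$ all of whose infinite pseudointersections converge.

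\medskip

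First I would exploit the elementarity machinery of Section~\ref{subsec:forcing-elementarity} to pull the relevant data into the model $N=M[G]$. Using countable tightness and the fact that under $\CH$ one has $\functions\omega M\subseteq M$, I expect that $D$ may be taken to lie in $N$ (or that I may replace $D$ by a suitable countable set in $N$ whose closure contains the limit behaviour I care about), and that the name $\fname{D}$ together with a name for $\calU$ can be reflected into some $M_\alpha$. The key structural point is that each subset of $\omega$ (hence each subset of the countable $D$) appearing in the generic extension and lying in $M[G_M]$ is captured by a countable name inside some $M_\alpha$, as the last paragraph of Section~\ref{subsec:forcing-elementarity} spells out. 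This lets me analyze $\calU$ through its trace on the intermediate model $V[G_M]$, where $\Poset_M$ is a poset of size $\aleph_1$ completely embedded in $\Poset$.

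\medskip

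The heart of the argument, and the step I expect to be the main obstacle, is to convert the $\omega_1$-finally property K hypothesis into the desired convergence. Here is where Definition~\ref{def.finally} is used: taking $\Qoset=\Poset_M$ (a completely embedded poset of size $\le\omega_1$), the quotient $\Poset/\fname{G}$ is forced to have property K. Working in $V[G_M]$, I would try to build in that intermediate model a decreasing sequence of sets witnessing the $\calU$-convergence, and then argue that the quotient forcing cannot destroy the convergence of pseudointersections. The delicate point is that convergence of a pseudointersection is a statement about \emph{all} neighbourhoods of the limit point, and neighbourhoods of $X$ live in the full extension $V[G]$, not in $V[G_M]$; so I must show that a pseudointersection chosen generically enough in $V[G_M]$ is forced by the property-K quotient to keep converging. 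I anticipate invoking a pre-Luzin-gap style obstruction (Lemma~\ref{preluzin}): if some candidate $\calU'$ failed, one could extract from the failure an uncountable family of disjoint pairs of traces forming a pre-Luzin gap in $V[G_M]$, which by property K of the quotient survives into $V[G]$ and contradicts either the countable tightness of $X$ or the $\omega_1$-freeness of $X$.

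\medskip

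So the skeleton is: (i) record countable tightness from $\omega_1$-freeness; (ii) reflect $D$, $\calU$, and all relevant names into an $M_\alpha$, and pass to the intermediate model $V[G_M]$ via the complete embedding $\Poset_M\subseteq\Poset$; (iii) in $V[G_M]$ produce the countable $\calU'$ and use property K of the quotient $\Poset/\fname{G}$ to show pseudointersections of $\calU'\cup\{A_n\}$ still converge in $V[G]$, controlling the potential failure through a pre-Luzin gap that property K preserves. The combinatorial lifting step in (iii) is where the argument is most likely to require care, since it must reconcile the ground-model-style construction of convergent sequences with the fact that the actual topology of $X$ is only available in the final extension.
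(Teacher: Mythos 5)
Your skeleton---reflect the data into the elementary sequence, pass to the intermediate model $V[G_M]$, and obtain a contradiction from a pre-Luzin gap together with property~K of the quotient $\Poset/G_M$---is exactly the paper's strategy, and your step~(i) (countable tightness from $\omega_1$-freeness) is also how the paper starts. But the proposal stops short of the actual construction, and the part you defer (``the combinatorial lifting step in~(iii)'') is precisely the heart of the proof; as written there is a genuine gap there. What is missing is the mechanism that converts a \emph{failure} of ultra-Fr\'echetness into a concrete pre-Luzin gap. The paper argues contrapositively: if $D$, a bijection $e:\omega\to D$ and an ultrafilter $\calU$ witness the failure (all placed in $N_0$ by elementarity, with $z=\lim e(\calU)$), then for every $\alpha$ the set $\bigcap\{\cl e[U]:U\in\calU\cap N_\alpha\}$ is strictly larger than $\{z\}$. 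One then applies Theorem~\ref{filter} to a filterbase refined by a neighbourhood $W$ of a second point (with $z\notin\cl W$) to produce a point $x_\alpha\ne z$ together with a countable local $\pi$-net $\{e[A]:A\in\calA_\alpha\}$ with $\calA_\alpha\subseteq(\calU\cap N_\alpha)^+$, all in $N_{\alpha+1}$. Your proposal never invokes Theorem~\ref{filter}, and without it (or a substitute) there is no way to manufacture the $\omega_1$-sequence of points and the associated positive sets from which the gap is built.

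The second missing ingredient is where $\omega_1$-freeness actually enters: the sequence $\omegaoneseq{x}$ must have two distinct complete accumulation points, giving open sets $O_1,O_2$ with disjoint closures each containing uncountably many $x_\alpha$; choosing $a_\alpha$, $b_\alpha$ from the $\pi$-nets of points forced into $O_1$ and $O_2$ respectively yields the pairs $\orpr{a_\alpha}{b_\alpha}$. These form a pre-Luzin gap in $V[G_M]$ because every $E\subseteq\omega$ in $V[G_M]$ lies in some $N_\alpha$ and is decided by $\calU$, so both $a_\beta$ and $b_\beta$ meet $E$ infinitely for $\beta>\alpha$; yet in $V[G]$ the set $E=e\preim[O_1]$ splits the gap. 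Note that the contradiction runs in the opposite direction from the one you sketch: Lemma~\ref{preluzin} guarantees the gap \emph{survives} into $V[G]$, while the construction exhibits a set that destroys it---the gap's survival is not itself what contradicts tightness or freeness. Your plan correctly names all the tools (complete embedding of $\Poset_M$, countable names living in $M_{\alpha+1}$, Lemma~\ref{preluzin}), but without the $\pi$-net construction and the two-accumulation-point step the argument cannot be completed as described.
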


\begin{proof}
We assume $X$ is not ultra-Fr\'echet and take a countable set $D$ witnessing 
this --- we know already that $X$~has countable tightness.
Let $e:\omega\to D$ be a bijection and let $\calU$ be an ultrafilter 
on~$\omega$ such that every countable subfamily of it has an infinite 
pseudointersection whose image under~$e$ does not converge.
Let $z$ be the limit of~$e(\calU)$ and observe that 
$\{z\}\neq\bigcap\{\cl e[U]:U\in\calU'\}$ whenever $\calU'$ is a countable
subfamily of~$\calU$; indeed, if equality were to hold then the image of every 
infinite pseudointersection of~$\calU'$ would converge to~$z$.

By elementarity we can assume that $D$, $e$, $\calU$ and (hence) $z$ belong 
to~$N_0$.

For each $\alpha$ we find a point $x_\alpha\in X$ and a countable 
family~$\calA_\alpha$ of subsets of~$\omega$ as follows.
We apply the property above to $\calU\cap N_\alpha$ and fix $y\in N_{\alpha+1}$
such that $y\neq z$ and $y\in\bigcap\{\cl e[U]:U\in\calU\cap N_\alpha\}$.
Next we take a neighbourhood~$W$ of~$y$ with $z\notin\cl W$.

We apply Theorem~\ref{filter} to the filterbase generated by
$\{W\cap D\}\cup(e(\calU)\cap N_\alpha)$ to 
find a point~$x_\alpha$ and a countable local $\pi$-net at~$x_\alpha$ that can be 
written as~$\{e[A]:A\in\calA_\alpha\}$, where $\calA_\alpha$~is a countable 
subfamily of~$\bigl(\{e\preim[W]\}\cup(\calU\cap N_\alpha)\bigr)^+$, 
which itself is a subfamily of~$(\calU\cap N_\alpha)^+$.
Note that $x_\alpha\neq z$ because $x_\alpha\in\cl W$.

By elementarity the choices above --- $W$, $x_\alpha$ and $\calA_\alpha$ ---
can all be made in~$N_{\alpha+1}$.

As noted above in Subsection~\ref{subsec:forcing-elementarity}, 
since $\calA_\alpha$ is a \emph{countable} family of subsets
of~$\omega$ we can find a name for it that is actually a subset 
of~$M_{\alpha+1}$.
Therefore we know that each $\calA_\alpha$ and its members belong 
to~$V[G_M]$.

Since we assume that $X$~is $\omega_1$-free the sequence
$\omegaoneseq{x}$ will have (at least) two distinct
complete cluster points, so that there are two open sets~$O_1$ and~$O_2$
with disjoint closures that each contains $x_\alpha$ for uncountably 
many~$\alpha$.

Now we consider $V[G]$ as an extension of~$V[G_M]$ by the poset~$\Poset/G_M$ 
and choose a sequence $\omegaoneseq{p}$ of conditions
in the latter,
two strictly increasing sequences of ordinals
$\langle\beta_\alpha:\alpha\in\omega_1\rangle$ and 
$\langle\gamma_\alpha:\alpha\in\omega_1\rangle$, and
two sequences
$\omegaoneseq{a}$ and 
$\omegaoneseq{b}$ of subsets of~$\omega$, 
such that
$$
p_\alpha\forces \fname x_{\beta_\alpha}\in\fname O_1
\text { and }
p_\alpha\forces \fname x_{\gamma_\alpha}\in\fname O_2
$$
and $a_\alpha\in\calA_{\beta_\alpha}$ and $b_\alpha\in\calA_{\gamma_\alpha}$, and
$$
p_\alpha\forces e[a_\alpha]\subseteq\fname O_1
\text { and }
p_\alpha\forces e[b_\alpha]\subseteq\fname O_2
$$
We apply the ccc to find $q\in\Poset/G_M$ that forces
$G\cap \{p_\alpha:\alpha\in\omega_1\}$ to be uncountable.

In $V[G]$ we form $A=\{\alpha:p_\alpha\in G\}$ and $E=e\preim[O_1]$;
then $a_\alpha\subseteq E$ and $b_\alpha\cap E=\emptyset$ for $\alpha\in A$,
so that $\{\orpr{a_\alpha}{b_\alpha}:\alpha\in\omega_1\}$ is not a pre-Luzin
gap in $V[G]$.

However, in $V[G_M]$ the set $\{\orpr{a_\alpha}{b_\alpha}:\alpha\in\omega_1\}$
\emph{is} a pre-Luzin gap.
For if $E\subseteq\omega$ belongs to $V[G_M]$ then it belongs to $N_\alpha$
for some~$\alpha$ and either it or its complement belongs to~$\calU$,
say $E\in\calU$.
But then $a_\beta$ and $b_\beta$ both meet $E$~in an infinite set 
whenever $\beta>\alpha$.
\end{proof}

The next step is to prove that $X$~is first-countable.

\begin{proposition}\label{prop:1st-ctble}
In $V[G]$ the space $X$ is first-countable.  
\end{proposition}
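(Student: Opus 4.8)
The plan is to argue by contradiction along the lines of the proof of Proposition~\ref{prop:ultra-Frechet}: from a point of uncountable character I would manufacture a family that is a pre-Luzin gap in the intermediate model $V[G_M]$ but fails to be one in $V[G]$, which is impossible because $V[G]$ is a property~K extension of $V[G_M]$. First I would reduce to the separable case. Since $X$ is compact and $\omega_1$-free, Theorem~\ref{separable} lets us assume, were $X$ not first-countable, that there is a countable set $D$ for which $Y=\cl D$ has a point $x$ with $\{x\}$ not a $G_\delta$; note that $Y$ is again compact, $\omega_1$-free, of countable tightness, and (being closed, so that convergence is absolute) ultra-Fr\'echet by Proposition~\ref{prop:ultra-Frechet}. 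Fix a bijection $e\colon\omega\to D$ and arrange that $Y$, $D$, $e$ and $x$ lie in~$N_0$.

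Next I would run the construction along the elementary sequence. At stage $\alpha$ the failure of $\{x\}$ to be a $G_\delta$ supplies an auxiliary point $y_\alpha\in N_{\alpha+1}\setminus\{x\}$ lying in every open neighbourhood of~$x$ that belongs to~$N_\alpha$. Because $y_\alpha\in Y=\cl D$ I would then choose, in $N_{\alpha+1}$, an ultrafilter $\calU_\alpha$ on~$D$ converging to~$y_\alpha$, and a neighbourhood $W_\alpha$ of~$y_\alpha$ with $x\notin\cl W_\alpha$. Applying Theorem~\ref{filter} to the filterbase generated by $\{W_\alpha\cap D\}\cup(e(\calU_\alpha)\cap N_\alpha)$ yields a point $x_\alpha\in\cl W_\alpha$ (so $x_\alpha\neq x$) together with a countable family $\calA_\alpha\subseteq(\calU_\alpha\cap N_\alpha)^+$ of subsets of~$\omega$ whose images under~$e$ form a local $\pi$-net at~$x_\alpha$; here Lemma~\ref{lemma:use_of_ultra} guarantees that the filterbase behaves well, and as in Subsection~\ref{subsec:forcing-elementarity} each $\calA_\alpha$, being a countable family of integers, can be named so as to lie in~$V[G_M]$.

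Now I would apply $\omega_1$-freeness to $\omegaoneseq{x}$: the set $\{x_\alpha:\alpha\in\omega_1\}$ has two distinct complete accumulation points, so there are open sets $O_1$ and~$O_2$ with disjoint closures, each containing $x_\alpha$ for uncountably many~$\alpha$. Passing to the two subsequences I would pick $a_\alpha\in\calA_{\beta_\alpha}$ with $e[a_\alpha]\subseteq O_1$ and $b_\alpha\in\calA_{\gamma_\alpha}$ with $e[b_\alpha]\subseteq O_2$. In $V[G]$ the set $E=e\preim[O_1]$ then satisfies $E\cap(a_\alpha\cup b_\alpha)=a_\alpha$ for all these~$\alpha$, so $\{\orpr{a_\alpha}{b_\alpha}:\alpha\in\omega_1\}$ is not a pre-Luzin gap in~$V[G]$. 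The contradiction will come from showing that it \emph{is} a pre-Luzin gap in~$V[G_M]$: since $\Poset$ is $\omega_1$-finally property~K and $\Poset_M$ is a completely embedded suborder of size at most~$\aleph_1$, the quotient $\Poset/G_M$ has property~K in~$V[G_M]$, whence Lemma~\ref{preluzin} forbids a $V[G_M]$-gap from being destroyed.

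The main obstacle is precisely the verification of the gap in~$V[G_M]$. Given $E\in V[G_M]$, hence $E\in N_\delta$ for some~$\delta$, positivity of $a_\alpha$ and~$b_\alpha$ with respect to $\calU_{\beta_\alpha}\cap N_\delta$ and $\calU_{\gamma_\alpha}\cap N_\delta$ forces $E\cap(a_\alpha\cup b_\alpha)=^*a_\alpha$ to entail $E\in\calU_{\beta_\alpha}$ and $\omega\setminus E\in\calU_{\gamma_\alpha}$, i.e.\ $y_{\beta_\alpha}\in\cl e[E]$ and $y_{\gamma_\alpha}\in\cl e[\omega\setminus E]$. Since each $y_\alpha$ with $\alpha>\delta$ lies in every $N_\delta$-neighbourhood of~$x$, this cannot persist if $x$ is interior to either $\cl e[E]$ or $\cl e[\omega\setminus E]$, so the only dangerous case is $x\in\cl e[E]\cap\cl e[\omega\setminus E]$ with $x$ interior to neither, that is, $x$ on the common boundary. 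The delicate point, where I expect the real work to lie, is to make the choices of the~$y_\alpha$ (exploiting that $\{x\}$ is not a $G_\delta$ together with a Baire-category argument inside the $G_\delta$-set $\bigcap\{U:U\in N_\alpha$ and $U$~a neighbourhood of~$x\}$) so that for each ground-model~$E$ only countably many $y_\alpha$ fall on the nowhere dense boundary $\cl e[E]\setminus\Int\cl e[E]$; this is what renders the separation impossible for all but countably many~$\alpha$ and closes the argument.
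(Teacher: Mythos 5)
Your overall strategy (reduce to a separable subspace, build a family of pairs $\orpr{a_\alpha}{b_\alpha}$, and play the pre-Luzin gap in $V[G_M]$ against its destruction in $V[G]$) is in the right spirit, but the step you yourself flag as ``where the real work lies'' is a genuine gap, and the Baire-category repair you sketch does not obviously close it. Because you draw $a_\alpha$ and $b_\alpha$ from positive families of two \emph{different} ultrafilters $\calU_{\beta_\alpha}$ and $\calU_{\gamma_\alpha}$ converging to two different points, a set $E\in N_\delta$ can perfectly well lie in $\calU_{\beta_\alpha}$ while its complement lies in $\calU_{\gamma_\alpha}$ for uncountably many $\alpha$, and nothing in your construction rules this out: the sets $\cl e[E]\setminus\Int\cl e[E]$ are nowhere dense in $X$ but need not be nowhere dense in the compact $G_\delta$-set $\bigcap\{U\in N_\alpha: x\in U\}$ from which you pick $y_\alpha$ (that $G_\delta$ could even be entirely contained in such a boundary), so the category argument inside it has no purchase. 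Without this, the family need not be a pre-Luzin gap in $V[G_M]$ and the contradiction evaporates.

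The paper avoids the issue with a different device that you should compare with your attempt. At stage $\alpha$ it builds a binary partition tree $\{A(\alpha,s):s\in\functions{<\omega}2\}$ of $\omega$ whose finite levels eventually refine $\{A,\omega\setminus A\}$ for \emph{every} $A\in N_\alpha$, picks a single branch $f_\alpha$ with both $z$ and some $y_\alpha\neq z$ in $\bigcap_m\cl D(\alpha,f_\alpha\restr m)$ (this is where ``$\{z\}$ is not a $G_\delta$ in $\cl D$'' is used), and then uses ultra-Fr\'echetness (Lemma~\ref{lemma:use_of_ultra}) to extract pseudointersections $a_\alpha,b_\alpha$ of the \emph{same} decreasing chain $\langle A(\alpha,f_\alpha\restr m)\rangle_m$ converging to $z$ and to $y_\alpha$ respectively. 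Since every $E\in N_\alpha$ is decided by a finite level of the tree, it almost contains or is almost disjoint from $a_\beta$ and $b_\beta$ \emph{simultaneously} for all $\beta\ge\alpha$, so the pre-Luzin property in $V[G_M]$ is immediate --- no boundary analysis needed. The logical use of the gap is then the reverse of yours: property~K preserves the gap into $V[G]$, and the surviving gap forces $\omegaoneseq{y}$ to converge to $z$, contradicting $\omega_1$-freeness directly. The essential idea you are missing is to tie $a_\alpha$ and $b_\alpha$ to one common filter base (one branch) rather than to two unrelated ultrafilters.
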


\begin{proof}
We assume it is not and apply Theorem~\ref{separable} to find a countable
subset~$D$ of~$X$ and a point~$z$ in~$\cl D$ that does not have a countable
local base in~$\cl D$. 
By elementarity $z$ and $D$ can be found in~$N_0$; as above we take a 
bijection~$e$ from~$\omega$ to~$D$, also in~$N_0$.

Using the fact that $X$ is ultra-Fr\'echet we shall construct an 
$\omega_1$-sequence that converges to~$z$.
To this end we observe that for every $\alpha$ one can build a family 
$\{A(\alpha,s):s\in\functions{<\omega}2\}$
of subsets of~$\omega$ that has the following 
properties
\begin{enumerate}
\item $A(\alpha,\emptyset)=\omega$,
\item $A(\alpha,s)=A(\alpha,s*0)\cup A(\alpha,s*1)$ 
      (the $*$ denotes concatenation),
\item $A(\alpha,s*0)\cap A(\alpha,s*1)=\emptyset$,
\item for every subset $A$ of~$\omega$ that is in~$N_\alpha$ there is 
      an~$n$ such that the family $\{A(\alpha,s):s\in\functions n2\}$ refines
       $\{A, \omega\setminus A\}$.
\end{enumerate}
This can be done in a simple recursion, using the fact that $N_\alpha$~is
countable; by elementarity we can assume that the 
family $\{A(\alpha,s):s\in\functions{<\omega}2\}$ belongs to~$N_{\alpha+1}$.
Each $A(\alpha,s)$ determines, via~$e$, a subset $D(\alpha,s)$ of~$D$.

Fix an $\alpha$ and form 
$$
C_\alpha=\{s\in\functions{<\omega}2: z\notin\cl D(\alpha, s)\};
$$
the set $\bigcap \{\cl D\setminus\cl D(\alpha, s)\}$ is a $G_\delta$-set
in~$\cl D$ and hence it is not equal to~$\{z\}$.
Thus we may pick $y_\alpha\neq z$ in this intersection and a function~$f_\alpha$
in~$\functions\omega2$ such that $y_\alpha\in\cl D(\alpha,f_\alpha\restr m)$ 
for all~$m$; note that then also $z\in\cl D(\alpha,f_\alpha\restr m)$ 
for all~$m$.

Now apply Lemma~\ref{lemma:use_of_ultra} and choose infinite 
pseudointersections $a_\alpha$ and $b_\alpha$ of the 
$A(\alpha,f_\alpha\restr m)$ such that $e[a_\alpha]$ and~$e[b_\alpha]$
converge to~$z$ and~$y_\alpha$ respectively.
By elementarity $C_\alpha$, $y_\alpha$, $f_\alpha$, $a_\alpha$ and~$b_\alpha$
can all be chosen in~$N_{\alpha+1}$, and as in the previous proof,
the countable objects --- in particular $a_\alpha$ and~$b_\alpha$ ---
actually belong to $M_{\alpha+1}[G_M]$.

Again as in the previous proof: any subset of~$\omega$ that belongs 
to~$V[G_M]$ has a name that is in~$M$ and hence belongs to $N_\alpha$ for 
some~$\alpha$, which implies that it either contains or is disjoint from 
both $a_\beta$ and~$b_\beta$ whenever $\beta\ge\alpha$.
Thus, $\{\orpr{a_\alpha}{b_\alpha}:\alpha\in\omega_1\}$ is a pre-Luzin gap
in $V[G_M]$ and hence, because $\Poset/G_M$ has property~K,  
it is also a pre-Luzin gap in~$V[G]$.

We show that this implies that $\omegaoneseq{y}$ converges to~$z$.
Indeed, let $U$ be a neighbourhood of~$z$ and consider $e\preim[U\cap D]$;
this set contains a cofinite part of every set~$a_\alpha$ and hence
an infinite part of all but countably many~$b_\alpha$.
This implies that $y_\alpha\in\cl U$ for all but countably many~$\alpha$. 
\end{proof}

\subsection*{Proof that $X$ is L-reflecting}

Since $\omegaoneseq{M}$ is an arbitrary elementary sequence
it suffices to show that $X\cap N$~is Lindel\"of.
This will require some more notation.

To begin we fix, in~$V$, a cardinal~$\kappa$ and we assume that $\fname X$~is 
forced to be a subset of $[0,1]^\kappa$; we can take $\kappa$ in~$M_0$.
We also write $\Gamma=\kappa\cap M$ and let $\pi_\Gamma$ denote
the projection of~$[0,1]^\kappa$ onto~$[0,1]^\Gamma$.
We shall show three things
\begin{enumerate}
\item $\pi_\Gamma$ is a homeomorphism between $X\cap N$
       and $\pi_\Gamma[X\cap N]$ in~$V[G]$, 
\item $\pi_\Gamma[X\cap N]$ is in~$V[G_M]$, and
\item $\pi_\Gamma[X\cap N]$ is a closed subset of $[0,1]^\Gamma$ in~$V[G_M]$.
\end{enumerate}

Proposition~\ref{kosz} then implies that $\pi_\Gamma[X\cap N]$ is 
Lindel\"of in~$V[G]$ and hence that $X\cap N$~is Lindel\"of too.

The first item is a consequence of the first-countability of~$X$.

\begin{lemma}
The map $\pi_\Gamma$ is a homeomorphism between $X\cap N$ 
and $\pi_\Gamma[X\cap N]$.  
\end{lemma}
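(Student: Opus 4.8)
The plan is to deduce everything from the first-countability of~$X$ (Proposition~\ref{prop:1st-ctble}) together with the absoluteness of ordinals, namely the equality $\kappa\cap N=\Gamma$. I would establish this last equality first. One inclusion is trivial, since $M\subseteq N$ and $\Gamma=\kappa\cap M$. For the other, take an ordinal $\xi<\kappa$ in $N=M[G]$, say $\xi=\val_G(\fname\xi)$ with $\fname\xi\in M$ a $\Poset$-name. Exactly as in the treatment of names for subsets of~$\omega$ in Subsection~\ref{subsec:forcing-elementarity}, the ccc lets us code $\fname\xi$ by a countable set $T=\{\orpr{p_n}{\beta_n}:n\in\omega\}$, where $\{p_n:n\in\omega\}$ is a maximal antichain and $p_n\forces\fname\xi=\check\beta_n$; by elementarity such a $T$ may be found in~$M$, whence $T\subseteq M$ and in particular $\xi=\beta_n\in M$ for the unique $n$ with $p_n\in G$. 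Thus $\kappa\cap N=\Gamma$, and consequently every finite subset of~$\kappa$ lying in~$N$ is already a subset of~$\Gamma$.

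Next I would produce, for each $x\in X\cap N$, a neighbourhood base at~$x$ in~$X$ consisting of basic open boxes whose support is contained in~$\Gamma$. Since $X$ is first-countable and $x,X\in N$, elementarity yields a countable family $\{U_n:n\in\omega\}$ of basic open subsets of $[0,1]^\kappa$, each belonging to~$N$, such that $\{X\cap U_n:n\in\omega\}$ is a neighbourhood base at~$x$ in~$X$. Each support $\supp U_n$ is a finite subset of~$\kappa$ belonging to~$N$, hence $\supp U_n\subseteq\Gamma$ by the previous paragraph. The crucial consequence is that $U_n=\pi_\Gamma\preim[\pi_\Gamma[U_n]]$, so that membership in~$U_n$ depends only on the coordinates in~$\Gamma$ and $\pi_\Gamma[U_n]$ is open in $[0,1]^\Gamma$.

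With this in hand the rest is routine. Continuity of $\pi_\Gamma$ is automatic. For injectivity, if $x\neq y$ in $X\cap N$ then, $X$ being Hausdorff, some basic neighbourhood $U_n$ of~$x$ from the base above misses~$y$; as $U_n$ is $\Gamma$-supported this gives $\pi_\Gamma(x)\in\pi_\Gamma[U_n]$ while $\pi_\Gamma(y)\notin\pi_\Gamma[U_n]$, so $\pi_\Gamma(x)\neq\pi_\Gamma(y)$. For the inverse I would show $\pi_\Gamma$ is relatively open: the sets $X\cap N\cap U_n$ form a neighbourhood base at~$x$ in the subspace $X\cap N$, and $\pi_\Gamma[X\cap N\cap U_n]=\pi_\Gamma[X\cap N]\cap\pi_\Gamma[U_n]$, which is relatively open in $\pi_\Gamma[X\cap N]$; here the nontrivial inclusion uses precisely that $U_n$ is $\Gamma$-supported, so that $\pi_\Gamma(z)\in\pi_\Gamma[U_n]$ already forces $z\in U_n$. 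Thus $\pi_\Gamma$ carries a neighbourhood base at each point to a neighbourhood base at its image, and is therefore a homeomorphism onto $\pi_\Gamma[X\cap N]$.

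The main obstacle is the first paragraph: arranging that the supports of the chosen neighbourhood boxes fall inside~$\Gamma=\kappa\cap M$, i.e.\ the ordinal-absoluteness equality $\kappa\cap M[G]=\kappa\cap M$. Once this is secured, the $\Gamma$-supportedness of the neighbourhood base makes both injectivity and openness fall out at once, since $\pi_\Gamma$ then faithfully reflects the local structure of~$X$ at each point of $X\cap N$.
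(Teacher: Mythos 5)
Your proof is correct and follows the same route as the paper's: use first-countability plus elementarity to get a countable local base of basic open sets lying in~$N$, observe that their supports are then contained in~$\Gamma$, and conclude. You have simply filled in the detail the paper leaves implicit (that $\kappa\cap M[G]=\kappa\cap M$ by the ccc, so finite supports in~$N$ land in~$\Gamma$), and the routine verification that a $\Gamma$-supported base makes $\pi_\Gamma$ injective and relatively open.
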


\begin{proof}
Because $X$~is first-countable there is a countable local base at each
point of~$X\cap N$ that consists of basic open sets and that, by elementarity,
may be taken to be a member of~$N$.
The latter means that all members of such a local base have their
supports in~$\Gamma$.
This is enough to establish the lemma.
\end{proof}

In the proof of the other two statements we abbreviate $M_\alpha[G_M]$ 
by~$M_\alpha^+$ and $M[G_M]$ by~$M^+$.
We also need a way to code members of~$[0,1]^\kappa$ that makes it easy
to calculate (names for) projections of members of~$X$.
A point of~$[0,1]^\Gamma$ is determined by a function 
$x:\kappa\times\omega\to2$: its $\gamma$th coordinate is given 
by~$\sum_{n\in\omega}x(\gamma,n)\cdot2^{-n-1}$.

If $\fname x\in M$ is such a name and $x=\val_G(\fname x)$ then one readily
checks that $\pi_\Gamma(x)=\val_{G_M}(\fname x)$.

We let $X^+$ denote the set of $\Poset$-names of such functions that
are forced by $\Poset$ to determine members of~$X$.
Note that $X^+\in M_0$ by elementarity.

Using these names it is easy to prove the second item in our list.

\begin{lemma}
The set $\pi_\Gamma[X\cap N]$ belongs to $V[G_M]$.  
\end{lemma}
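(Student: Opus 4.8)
The plan is to pin down $\pi_\Gamma[X\cap N]$ by an explicit formula that visibly depends only on~$G_M$, namely
\[
  \pi_\Gamma[X\cap N]=\{\val_{G_M}(\fname x):\fname x\in X^+\cap M\}.
\]
Granting this, we are done: $X^+\cap M$ is a set of names lying in the ground model~$V$ (since $X^+\in M_0$ and the elementary sequence, hence~$M$, is in~$V$), while the assignment $\fname x\mapsto\val_{G_M}(\fname x)$ is carried out inside~$V[G_M]$, so the right-hand side is a set in~$V[G_M]$. Everything therefore reduces to the displayed equality, which I prove by two inclusions, using throughout the identity $\pi_\Gamma(\val_G(\fname x))=\val_{G_M}(\fname x)$ valid for every coded name $\fname x\in M$.

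The inclusion from right to left is immediate. If $\fname x\in X^+\cap M$ then $\val_G(\fname x)\in X$, because $\fname x$ is forced into~$\fname X$, and $\val_G(\fname x)\in N$, because $\fname x\in M$; hence $\val_G(\fname x)\in X\cap N$ and $\val_{G_M}(\fname x)=\pi_\Gamma(\val_G(\fname x))$ lies in $\pi_\Gamma[X\cap N]$.

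For the reverse inclusion I take $x\in X\cap N$ and, using $x\in N$ together with elementarity, fix a coded name $\tau\in M$ with $\val_G(\tau)=x$. The tempting move---to locate a condition in~$M$ that forces $\tau\in\fname X$ and to read a name off it---does not work, since whether a point lies in~$X$ is not decided by~$G_M$. Instead I fix once and for all a coded name $\fname x_0\in M_0$ with $\mathbf 1\forces\fname x_0\in\fname X$ (available by elementarity; we may assume $\mathbf 1$ forces $\fname X$ to be nonempty, passing below a condition of~$G$ if need be) and form the mixed coded name~$\fname x$ that reads coordinatewise as~$\tau$ on the part of~$\Poset$ forcing $\tau\in\fname X$ and as~$\fname x_0$ elsewhere. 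Being definable from the parameters $\tau,\fname x_0,\fname X\in M$, the name~$\fname x$ belongs to~$M$; being forced by~$\mathbf 1$ into~$\fname X$, it belongs to~$X^+$. Finally, since $x=\val_G(\tau)\in X$ the defining alternative is resolved in favour of~$\tau$, so $\val_G(\fname x)=x$, and one more application of the identity gives $\val_{G_M}(\fname x)=\pi_\Gamma(x)$, as required.

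The single delicate point---and the step at which a direct approach stalls---is precisely this construction of~$\fname x$: one needs a name that simultaneously lies in~$X^+$, has $G_M$-valuation equal to~$\pi_\Gamma(x)$, and is a member of~$M$. The mixing device meets all three demands at once and neatly sidesteps the obstruction that no condition in~$M$ need witness $x\in X$.
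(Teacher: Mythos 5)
Your proof is correct and follows essentially the same route as the paper, which simply asserts the identity $\pi_\Gamma[X\cap N]=\{\val_{G_M}(\fname x):\fname x\in X^+\}$ (implicitly with $\fname x$ ranging over $X^+\cap M$) and concludes immediately. The only difference is that you make explicit the one genuinely non-obvious step — producing, via mixing with a fixed name $\fname x_0\in M_0$, a name in $X^+\cap M$ for a given point of $X\cap N$ — which the paper leaves to the reader under the phrase ``using the coding described above.''
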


\begin{proof}
Using the coding described above it follows that
$\pi_\Gamma[X\cap N]=\{\val_{G_M}(\fname x):x\in X^+\}$;
the latter set belongs to $V[G_M]$.
\end{proof}

In preparation for the proof of the third item in our list we prove.

\begin{lemma}\label{lemma:equal-alpha}
For every $\alpha$ we have 
$\pi_\Gamma[X\cap N_\alpha]=\pi_\Gamma[X]\cap M_\alpha^+$.  
\end{lemma}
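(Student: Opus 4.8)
The plan is to prove the two inclusions separately, in both cases reducing to the coding of points by functions $\kappa\times\omega\to2$ and exploiting the two facts already in hand: the identity $\pi_\Gamma(\val_G(\fname x))=\val_{G_M}(\fname x)$ for coding names $\fname x\in M$, and the description $\pi_\Gamma[X\cap N]=\{\val_{G_M}(\fname x):\fname x\in X^+\}$ from the previous lemma. Since at limit~$\alpha$ both $N_\alpha=\bigcup_{\beta<\alpha}N_\beta$ and $M_\alpha^+=\bigcup_{\beta<\alpha}M_\beta^+$ are increasing unions (a name in $M_\alpha=\bigcup_{\beta<\alpha}M_\beta$ already lives in some~$M_\beta$), the limit stages follow at once from the lower stages, so the real content is a uniform argument valid for every~$\alpha$.

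For the inclusion $\pi_\Gamma[X\cap N_\alpha]\subseteq\pi_\Gamma[X]\cap M_\alpha^+$ I would argue directly. If $x\in X\cap N_\alpha$ then, by the very definition of $N_\alpha=M_\alpha[G]$, there is a $\Poset$-name $\sigma\in M_\alpha$ with $\val_G(\sigma)=x$; since the passage from an arbitrary name for a point of $[0,1]^\kappa$ to a coding name is definable, elementarity of $M_\alpha\prec H(\theta)$ lets me take $\sigma$ to be a coding name in $X^+\cap M_\alpha$. The identity then gives $\pi_\Gamma(x)=\val_{G_M}(\sigma)$, and as $\sigma\in M_\alpha$ this value lies in $M_\alpha[G_M]=M_\alpha^+$; of course $\pi_\Gamma(x)\in\pi_\Gamma[X]$ as well.

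The reverse inclusion is the substantive one. Given $q\in\pi_\Gamma[X]\cap M_\alpha^+$, the previous lemma already yields $q=\val_{G_M}(\fname x_0)$ for some $\fname x_0\in X^+$, and $q=\val_{G_M}(\tau)$ for some name $\tau\in M_\alpha$; the task is to produce a \emph{single} name in $X^+\cap M_\alpha$ realising~$q$, for then its $G$-evaluation is a point of $X\cap N_\alpha$ projecting to~$q$. The route I would take uses the first-countability of~$X$: every point of $X\cap N$ has a countable local base, with supports in~$\Gamma$, that is a member of~$N$, so the point depends on only countably many coordinates, each an ordinal in some~$M_\beta$. This brings the data of such a point within reach of the countable-name machinery of Subsection~\ref{subsec:forcing-elementarity}, which places countable names, and hence the corresponding coordinate-data, inside the appropriate~$M_\alpha$. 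Combining this with elementarity of $N_\alpha=M_\alpha[G]\prec H(\theta)[G]$ and the identity relating $\val_G$ and $\val_{G_M}$, I would locate a coding name $\fname x\in X^+\cap M_\alpha$ with $\val_{G_M}(\fname x)=q$.

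The hard part will be exactly this descent of the witness into~$M_\alpha$, and the main obstacle is an asymmetry in the set-up: although $\Poset$, $\fname X$ and $X^+$ all belong to~$M_\alpha$, the order $\Poset_M=\Poset\cap M$ does \emph{not}, so $M_\alpha^+=M_\alpha[G_M]$ cannot be treated as an elementary submodel of $H(\theta)[G_M]$ and one cannot simply reflect a witness for the statement ``there is $\fname x\in X^+$ with $\val_{G_M}(\fname x)=q$'' down into~$M_\alpha$. The way around this is to phrase membership in~$X$ as a $\Poset$-forcing statement — legitimate since the relevant parameters lie in~$M_\alpha$ and $N_\alpha\prec H(\theta)[G]$ — and to use the compactness of~$X$, so that the fibre $\{v\in X:\pi_\Gamma(v)=q\}$ is a nonempty compact set, together with first-countability to pin the witness down to the countably many coordinates that $M_\alpha$ controls.
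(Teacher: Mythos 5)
Your first inclusion is fine and coincides with the paper's: for a coding name $\fname x\in M_\alpha$ the identity $\pi_\Gamma\bigl(\val_G(\fname x)\bigr)=\val_{G_M}(\fname x)$ immediately puts $\pi_\Gamma[X\cap N_\alpha]$ inside $\pi_\Gamma[X]\cap M_\alpha^+$. The converse, however, is where your text stops short of a proof. You correctly isolate the crux --- given $q\in\pi_\Gamma[X]\cap M_\alpha^+$ one must exhibit a point of $X\cap N_\alpha$ projecting to $q$ --- but the paragraph meant to deliver this only describes a strategy (first-countability, the countable-name machinery, compactness of the fibre $\{v\in X:\pi_\Gamma(v)=q\}$), and you yourself flag the ``descent of the witness into $M_\alpha$'' as the unresolved difficulty. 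As described, that strategy does not close the gap: the fibre over $q$ is not definable from parameters available in $N_\alpha$ (neither $q$ nor $\Gamma$ belongs to $N_\alpha$), so there is nothing to reflect into $N_\alpha$; and the observation that points of $X\cap N$ have local bases supported in $\Gamma$ presupposes that the witness already lies in $N$, which is exactly what is to be proved.

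The missing step is more direct and uses the one piece of data you mention and then abandon: a name $\tau\in M_\alpha$ with $\val_{G_M}(\tau)=q$, which exists simply because $q\in M_\alpha^+=M_\alpha[G_M]$. Evaluate that \emph{same} name with the full generic filter to get $x=\val_G(\tau)\in N_\alpha$. Since $q$ is (the code of) a point of $[0,1]^\Gamma$, the restriction $y=x\restr(\kappa\times\omega)$ is a function in $N_\alpha$ whose domain contains $\Gamma\times\omega$; as $\kappa\cap N_\alpha\subseteq\Gamma$, elementarity of $N_\alpha\prec H(\theta)[G]$ forces $\dom y=\kappa\times\omega$. Compactness of $X$ then gives $y\in X$: a basic open set containing $y$ and missing $X$ could, by elementarity, be found in $N_\alpha$, hence with support in $\Gamma$, and would separate $q$ from $\pi_\Gamma[X]$, contradicting $q\in\pi_\Gamma[X]$. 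Thus $y\in X\cap N_\alpha$ and $\pi_\Gamma(y)=q$. No appeal to first-countability, to $X^+$, or to fibres is needed for this lemma; the point you miss is that evaluating the given $M_\alpha$-name by $G$ instead of $G_M$ already hands you the required witness inside $N_\alpha$.
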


\begin{proof}
The equality $\val_{G_M}(\fname x)=\pi_\Gamma\bigl(\val_G(\fname x)\bigr)$
for $\fname x\in M_\alpha$ establishes the inclusion
$\pi_\Gamma[X\cap N_\alpha]\subseteq \pi_\Gamma[X]\cap M_\alpha^+$.

For the converse let $\fname x\in M_\alpha$ be such that 
$x_M=\val_{G_M}(\fname x)\in\pi_\Gamma[X]$; then $x=\val_G(\fname x)$
belongs to~$N_\alpha$ and, by elementarity, $y=x\restr(\kappa\times\omega)$
is a function that also belongs to~$N_\alpha$ and whose domain contains
$\Gamma\times\omega$.
By elementarity $\dom y=\kappa\times\omega$ and so $x_M=\pi_\Gamma(y)$.  
\end{proof}

The proof of our third statement will almost be a copy of that of 
Proposition~\ref{prop:ultra-Frechet}. 

\begin{proposition}\label{prop:equal}
In $V[G_M]$ the set $\pi_\Gamma[X\cap N]$ is closed in~$[0,1]^\Gamma$.  
\end{proposition}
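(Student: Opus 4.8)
The plan is to mimic the proof of Proposition~\ref{prop:ultra-Frechet} almost verbatim; the one essential new feature is that the point to which things converge will lie \emph{outside}~$N$.

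First I would set up the reduction. Suppose, towards a contradiction, that $\pi_\Gamma[X\cap N]$ is not closed in~$[0,1]^\Gamma$ in~$V[G_M]$, and fix a point $z_M$ in its closure but not in $\pi_\Gamma[X\cap N]$. Working in~$V[G]$, where $X$ is compact, the set $\cl(X\cap N)$ is compact, so $\pi_\Gamma[\cl(X\cap N)]$ is compact and hence closed; as it equals the $V[G]$-closure of $\pi_\Gamma[X\cap N]$ it contains~$z_M$, and I may pick $z\in\cl(X\cap N)$ with $\pi_\Gamma(z)=z_M$. Since no point of $X\cap N$ projects to~$z_M$ (its projection would lie in $\pi_\Gamma[X\cap N]$), we get $z\notin N$; by Lemma~\ref{lemma:equal-alpha} this is the same as $z_M\notin M^+$. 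Using countable tightness of~$X$ I would then choose a countable $D\subseteq X\cap N$ with $z\in\cl D$ (taking $D$ and a bijection $e:\omega\to D$ in~$N$, as $D$ is a countable subset of~$N$), and a free ultrafilter~$\calU$ on~$\omega$ with $e(\calU)\to z$. The crucial point, and the only real departure from Proposition~\ref{prop:ultra-Frechet}, is that \emph{$\calU\notin N$}: were $\calU$ in~$N$ then $z=\lim e(\calU)$ would be in~$N$ too. This is what rescues the construction even though $X$ is now known to be ultra-Fr\'echet: ultra-Fr\'echetness forces $\bigcap\{\cl e[U]:U\in\calU'\}=\{z\}$ for some \emph{countable} $\calU'\subseteq\calU$, but no such~$\calU'$ can lie inside~$N$, so the filters $\calU\cap N_\alpha$ need not have $\{z\}$ as closure-intersection, leaving room to pick points distinct from~$z$.

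With this in hand I would run the construction of Proposition~\ref{prop:ultra-Frechet}. For each~$\alpha$ I would, guided by $\calU\cap N_\alpha$, pick $x_\alpha\in N_{\alpha+1}$ distinct from~$z$ and lying in $\bigcap\{\cl e[U]:U\in\calU\cap N_\alpha\}$, together with (via Theorem~\ref{filter}) a countable local $\pi$-net at~$x_\alpha$ of the form $\{e[A]:A\in\calA_\alpha\}$ with $\calA_\alpha\subseteq(\calU\cap N_\alpha)^+$; as before, the countability of~$\calA_\alpha$ lets me take its members in $M_{\alpha+1}[G_M]$. Since $X$ is $\omega_1$-free the sequence $\omegaoneseq{x}$ cannot converge, so it has two complete cluster points and there are open sets $O_1,O_2$ with disjoint closures, each containing $x_\alpha$ for uncountably many~$\alpha$. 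I would then choose strictly increasing $\beta_\alpha,\gamma_\alpha$, sets $a_\alpha\in\calA_{\beta_\alpha}$ and $b_\alpha\in\calA_{\gamma_\alpha}$, and conditions $p_\alpha\in\Poset/G_M$ with $p_\alpha\forces e[a_\alpha]\subseteq O_1$ and $p_\alpha\forces e[b_\alpha]\subseteq O_2$. The contradiction is then obtained exactly as there. In $V[G_M]$ the family $\{\orpr{a_\alpha}{b_\alpha}:\alpha\in\omega_1\}$ is a pre-Luzin gap: any $E\subseteq\omega$ in $V[G_M]$ lies in some $M_\eta^+\subseteq N_\eta$, and since $\calU$ is an ultrafilter either~$E$ or its complement lies in $\calU\cap N_\eta$, whence positivity makes both $a_\alpha$ and~$b_\alpha$ meet that $\calU$-set infinitely whenever $\beta_\alpha,\gamma_\alpha>\eta$, so~$E$ separates only countably many pairs. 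Because $\Poset/G_M$ has property~K, Lemma~\ref{preluzin} keeps it a pre-Luzin gap in~$V[G]$. But applying the countable chain condition I can find $q\in\Poset/G_M$ forcing $G\cap\{p_\alpha:\alpha\in\omega_1\}$ to be uncountable, and then in~$V[G]$ the single set $E=e\preim[O_1]$ satisfies $E\cap(a_\alpha\cup b_\alpha)=a_\alpha$ for uncountably many~$\alpha$, contradicting that $\{\orpr{a_\alpha}{b_\alpha}\}$ is a pre-Luzin gap.

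The hard part will be the per-level construction of the third paragraph: with $\calU\notin N$ one must still produce, \emph{inside}~$N_{\alpha+1}$, a point $x_\alpha\neq z$ in $\bigcap\{\cl e[U]:U\in\calU\cap N_\alpha\}$ carrying a $\pi$-net positive against $\calU\cap N_\alpha$, and one must check that the two complete cluster points supplied by $\omega_1$-freeness genuinely arise from such points. This is exactly where the hypothesis $z_M\notin M^+$ does its work: it guarantees that the guiding ultrafilter escapes~$N$, so the closure-intersections stay large enough to feed the recursion, and the pairs $\orpr{a_\alpha}{b_\alpha}$ remain fresh—positive against every earlier $V[G_M]$-set—so as to form a genuine gap in~$V[G_M]$, while the $\omega_1$-free splitting into $O_1,O_2$ destroys that gap in~$V[G]$.
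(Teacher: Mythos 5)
Your overall architecture is the paper's: reduce to a point $z$ in the closure, attach a countable $D$ and an ultrafilter $\calU$ with $e(\calU)\to z$, run the machinery of Proposition~\ref{prop:ultra-Frechet} on the traces $\calU\cap N_\alpha$, and lift a pre-Luzin gap from $V[G_M]$ to $V[G]$ for a contradiction. You have also correctly isolated the new feature ($z\notin N$, equivalently $z_M\notin M^+$ via Lemma~\ref{lemma:equal-alpha}) and correctly predicted that the proof lives or dies in the per-level construction. But that is exactly the step you do not carry out, and the justification you offer for it is a non sequitur. You argue: $z_M\notin M^+$ forces $\calU\notin N$, and therefore ``the closure-intersections stay large enough.'' The first implication is fine; the second is not. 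An ultrafilter outside $N$ can perfectly well have a trace $\calU\cap N_\alpha$ with $\bigcap\{\cl e[U]:U\in\calU\cap N_\alpha\}$ a singleton (e.g.\ if the trace happens to contain the preimages of a local base at a point of $N$). What actually makes the intersections non-degenerate is a fact about the \emph{traces}, not about $\calU$: because $\functions\omega M\subseteq M$ (from $\CH$) and $\Poset_M$ is a ccc poset of cardinality $\aleph_1$, one has $\functions\omega{M^+}\subseteq M^+$, so each countable set $\calU_\alpha=\calU\cap M_\alpha^+$ is an \emph{element} of $M_{\beta_\alpha}^+$ for some $\beta_\alpha>\alpha$. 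Only then is the singleton case excluded: if $\bigcap\{\cl e[U]:U\in\calU_\alpha\}=\{z_M\}$, the singleton is definable from parameters in $M_{\beta_\alpha}^+$, so $z_M\in M_{\beta_\alpha}^+$ and Lemma~\ref{lemma:equal-alpha} puts $z_M$ into $\pi_\Gamma[X\cap N_{\beta_\alpha}]$, contradicting your standing assumption. Your sketch never invokes $\functions\omega{M^+}\subseteq M^+$ or any substitute, and the same omission undercuts ``were $\calU'$ inside $N$ then $z\in N$'': a countable subfamily of $N$ is not automatically an element of $N$.

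The same missing fact breaks the recursion as you state it: you ``pick $x_\alpha\in N_{\alpha+1}$ \dots\ lying in $\bigcap\{\cl e[U]:U\in\calU\cap N_\alpha\}$,'' but the parameter of that choice is the trace itself, which is not available in $N_{\alpha+1}$ (in Proposition~\ref{prop:ultra-Frechet} it was, because there $\calU\in N_0$). The choices must be made in $N_{\beta_\alpha}$ and the indexing adjusted accordingly; the paper also replaces the appeal to Theorem~\ref{filter} by the now-available first-countability from Proposition~\ref{prop:1st-ctble} to produce the convergent pseudointersection directly, though your route through Theorem~\ref{filter} would also work. A smaller slip: a countable $D\subseteq X\cap N$ is not ``in $N$ because it is countable''; take $D=X\cap N_\delta$ for suitable $\delta$, which is what countable tightness gives you anyway. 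None of these repairs changes your plan, but as written the central claim --- that every trace has a non-degenerate closure-intersection --- is asserted rather than proved, and the reason you give for it is not the right one.
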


\begin{proof}
In $V[G_M]$ let $z\in\cl\pi_\Gamma[X\cap N]$.
Of course $z$~is a point of~$\cl\pi_\Gamma[X\cap N]$ as computed in~$V[G]$ 
as well and hence $z\in\pi_\Gamma[X]$ as the latter set is compact; 
the task is to find $x\in X\cap N$ such that $z=\pi_\Gamma(x)$.

In $V[G]$ the set $\pi_\Gamma[X]$ is of countable tightness.
Hence $z$~is in the closure of $\pi_\Gamma[X\cap N_\delta]$ for 
some $\delta<\omega_1$.

We assume, to reduce indexing, that $\delta=0$ and we write $D$ 
for $\pi_\Gamma[X\cap N_0]$; we also take an enumeration $e:\omega\to D$  
that belongs to~$M_1^+$; we shall use $e$ also, implicitly, 
to enumerate~$X\cap N_0$.

We take an ultrafilter~$\calU$ on~$\omega$ such that $e(\calU)$~converges 
to~$z$.
Note that, in contrast with the proof of Proposition~\ref{prop:ultra-Frechet},
neither $z$ nor~$\calU$ need belong to~$M^+$.
However, because $\functions\omega M\subseteq M$ and because $\Poset_M$~is a 
ccc poset of cardinality~$\aleph_1$ we also have 
$\functions\omega{M^+}\subseteq M^+$.
Therefore we know that for every~$\alpha$ there is $\beta_\alpha>\alpha$ 
such that $\calU_\alpha=\calU\cap M_\alpha^+\in M_{\beta_\alpha}^+$.

Thus, if there is some $\alpha$ such that 
$\{z\}=\bigcap\{\cl e[U]:U\in\calU_\alpha\}$
then $z\in M_{\beta_\alpha}^+$ and Lemma~\ref{lemma:equal-alpha} applies
to show that $z\in\pi_\Gamma[X\cap N_{\beta_\alpha}]$.
From now on we assume 
$\{z\}\neq\bigcap\{\cl e[U]:U\in\calU_\alpha\}$ for all~$\alpha$
and follow the proof Proposition~\ref{prop:ultra-Frechet}
to reach a contradiction.

The only modification that needs to be made is when choosing the 
point~$x_\alpha$ and the family~$\calA_\alpha$.
Our assumption now yields that $\bigcap\{\cl e[U]:U\in\calU_\alpha\}$ has 
more than one point, hence there are two basic open sets $W_1$ and~$W_2$
in~$M_{\beta_\alpha}$ with disjoint closures that both meet this intersection.
We let $W$ be one of the two that does not have $z$ in its closure.

We now use first-countability of~$X$ to find $x_\alpha\in X\cap N_{\beta_\alpha}$
and an infinite pseudointersection~$c_\alpha$ 
of~$\calU_\alpha\cup\{e\preim[W]\}$, also in~$N_{\beta_\alpha}$, such
that $e[c_\alpha]$ converges to~$x_\alpha$ 
(remember that $e$ also enumerates $X\cap N_0$);
$\calA_\alpha$~consists of the cofinite subsets of~$c_\alpha$.

From here on the proof is the same as that of 
Proposition~\ref{prop:ultra-Frechet}.
\end{proof}

\section{Examples}

\subsection*{Juh\'asz' question}

Since our main result establishes a consistent positive answer to Juh\'asz' 
question we should begin by recording a consistent negative answer as well.

\begin{example}[\cite{JuhKoszSouk}] 
It is consistent to have a compact $\omega_1$-free space that
is not first-countable.
\end{example}

The space is the one-point compactification of a locally compact, 
first-countable and initially $\omega_1$-compact space that is locally of 
cardinality~$\aleph_1$. 
The space is not L-reflecting either but this is not easily shown so we omit 
the proof.

\begin{question}\label{q2}  
If $X$ is locally compact, not Lindel\"of, and initially $\omega_1$-compact,
does it fail to be L-reflecting?
\end{question}

\subsection*{Hu\v{s}ek versus Efimov}
We can also use our main result to show that Hu\v{s}ek's question is 
strictly weaker than Efimov's:
in~\cite{DwSh:984} it is shown that $\bee=\cee$ implies there is an Efimov 
space, that is, a compact Hausdorff space that contains neither a 
converging $\omega$-sequence nor a copy of~$\beta\N$.
Since we can use Hechler forcing to create models for~$\bee=\cee$, 
where $\cee$~can have any regular value we please, we get a slew of models 
where Hu\v{s}ek's question has a positive answer and Efimov's a negative~one.

\subsection*{The need for property K}

To demonstrate the need for property~$K$ in the proof of Theorem~\ref{main} 
we quote the following example. 

\begin{example}[\cite{Kosz}]\label{ex1}
There is a ccc poset $\Poset$, with a finite powers also ccc,
that forces the existence of a compact first-countable space that
is not weakly L-reflecting.
\end{example}

This space is constructed in Theorems~7.5 and~7.6 of~\cite{Kosz}. 
Theorem~7.5 produces a compact space~$K$ with $\functions{\omega_1}2$
as its underlying set and the property that whenever a point~$f$ and a sequence 
$\omegaoneseq{f}$ in~$\functions{\omega_1}2$ are given
such that $f_\alpha \cap f \in 2^\alpha$ for all~$\alpha$
then in~$K$ the point~$f$ is the limit of the converging $\omega_1$-sequence 
$\omegaoneseq{f}$. 
Theorem~7.6 then produces a compact first-countable space $X$ that maps 
onto~$K$. 

The poset is constructed in a ground model~$V$ that satisfies~$\CH$. 
We let $T=(\functions{<\omega_1}2)^V$ and we choose for each $t\in T$ 
an $f_t\in \functions{\omega_1}2$ such that $t\subset f_t$.
The closure of the set $Y=\{f_t:t\in T\}$ will contain all the cofinal 
branches of~$T$, and so will contain $\aleph_2$~many converging 
$\omega_1$-sequences with distinct limits of~$K$.
It then follows that any subset of~$X$ that maps onto~$Y$ will not be contained 
in a Lindel\"of subset of cardinality~$\aleph_1$.

\begin{question} 
Are compact spaces with small diagonal 
metrizable in the model described in Example~\ref{ex1}?
\end{question}

While we do not know the answer to this question, let us remark that the space 
constructed in Example~\ref{ex1} does not have a small diagonal. 
In fact it was the space~$X$ of Example~\ref{ex1} that was the motivation 
for Proposition~2.4 of~\cite{DH1}. 
The space~$X$ has copies of Cantor sets and $\omega_1$-sequences that 
co-countably converge to these. 
By the aforementioned proposition this implies that $X$ does not have a 
small diagonal.

\subsection*{Acknowledgement}
We thank the referee for suggesting that the proof of the L-reflecting 
property of~$X$ could be simplified and made more direct.

\begin{bibdiv}

\newcommand{\Zbl}{Zbl.~}

\begin{biblist}

\bib{MR621987}{article}{
author={Balcar, Bohuslav},                              
author={Simon, Petr},                                     
author={Vojt{\'a}{\v{s}}, Peter},
title={Refinement properties and extensions of filters in Boolean algebras},
journal={Transactions of the American Mathematical Society},
volume={267},
date={1981},
number={1},
pages={265--283},
issn={0002-9947},
review={\MR{621987 (82k:06014)}},
doi={10.2307/1998583},             
}

\bib{DH1}{article}{
author = {Dow, Alan},
author = {Hart, Klaas Pieter},
title = {Elementary chains and compact spaces with a small diagonal},
journal = {Indagationes Mathematicae},
year = {2012},
volume = {23},
pages = {438--447},
doi = {10.1016/j.indag.2012.02.008},
}

\bib{DwSh:984}{article}{
author={Dow, Alan},
author={Shelah, Saharon},
title={An Efimov space from Martin's Axiom},
journal={Houston Journal of Mathematics},
note={to appear},
}

\bib{MR0253290}{article}{
  author={Efimov, B.},
  title={The imbedding of the Stone-\v {C}ech compactifications of discrete 
         spaces into bicompacta},
  journal={Doklady Akademi\t {\i }a Nauk USSR},
  volume={189},
  date={1969},
  pages={244\ndash 246},
  issn={0002-3264},
  translation={journal={Soviet Mathematics. Doklady}, 
               volume={10}, 
               date={1969}, 
               pages={1391\ndash 1394}, },
  review={\MR {0253290 (40 \#6505)}},
  review={\Zbl 0204.22704},
  language={Russian},
}

\bib{GaryDiag}{article}{
   author={Gruenhage, Gary},
   title={Spaces having a small diagonal},
   journal={Topology and its Applications},
   volume={122},
   date={2002},
   number={1-2},
   pages={183--200},
   issn={0166-8641},
   review={\MR{1919300 (2003g:54050)}},
   doi={10.1016/S0166-8641(01)00140-7},
}

\bib{Husek}{article}{
   author={Hu{\v{s}}ek, M.},
   title={Topological spaces without $\kappa$-accessible diagonal},
   journal={Commentationes Mathematicae Universitatis Carolinae},
   volume={18},
   date={1977},
   number={4},
   pages={777--788},
   issn={0010-2628},
   review={\MR{0515009 (58 \#24198)}},
}

\bib {JuhKoszSouk}{article}{
    AUTHOR = {Juh{\'a}sz, Istv{\'a}n},
   author = {Koszmider, Piotr},
  author = {Soukup, Lajos},
     TITLE = {A first countable, initially {$\omega_1$}-compact but
              non-compact space},
  JOURNAL = {Topology and its Applications},
    VOLUME = {156},
      YEAR = {2009},
    NUMBER = {10},
     PAGES = {1863--1879},
      ISSN = {0166-8641},
  review = {\MR{2519221 (2010h:54011)}},
       DOI = {10.1016/j.topol.2009.04.004},
       URL = {http://dx.doi.org/10.1016/j.topol.2009.04.004},
}

\bib{MR1617461}{article}{
   author={Juh{\'a}sz, I.},
   author={Soukup, L.},
   author={Szentmikl{\'o}ssy, Z.},
   title={What is left of CH after you add Cohen reals?},
   note={8th Prague Topological Symposium on General Topology and Its
   Relations to Modern Analysis and Algebra (1996)},
   journal={Topology and its Applications},
   volume={85},
   date={1998},
   number={1-3},
   pages={165--174},
   issn={0166-8641},
   review={\MR{1617461 (99d:54001)}},
   doi={10.1016/S0166-8641(97)00148-X},
}

\bib{JuSzCvgt}{article}{
   author={Juh{\'a}sz, I.},
   author={Szentmikl{\'o}ssy, Z.},
   title={Convergent free sequences in compact spaces},
   journal={Proceedings of the American Mathematical Society},
   volume={116},
   date={1992},
   number={4},
   pages={1153--1160},
   issn={0002-9939},
   review={\MR{1137223 (93b:54024)}},
   doi={10.2307/2159502},
}

\bib{MR2586011}{article}{
   author={Juh{\'a}sz, Istv{\'a}n},
   author={Weiss, William A. R.},
   title={On the convergence and character spectra of compact spaces},
   journal={Fundamenta Mathematicae},
   volume={207},
   date={2010},
   number={2},
   pages={179--196},
   issn={0016-2736},
   review={\MR{2586011 (2011h:54004)}},
   doi={10.4064/fm207-2-6},
}

\bib{JunKosz}{article}{
    AUTHOR = {Junqueira, L{\'u}cia},
 author = {Koszmider, Piotr},
 TITLE = {On families of {L}indel\"of and related subspaces of {$2^{\omega_1}$}},
  JOURNAL = {Fundamenta Mathematicae},
    VOLUME = {169},
      YEAR = {2001},
    NUMBER = {3},
     PAGES = {205--231},
      ISSN = {0016-2736},
  review = {\MR{1852126 (2002h:03110)}},
       DOI = {10.4064/fm169-3-2},
       URL = {http://dx.doi.org/10.4064/fm169-3-2},
}

\bib{Kosz}{article}{
    AUTHOR = {Koszmider, Piotr},
     TITLE = {Forcing minimal extensions of {B}oolean algebras},
  JOURNAL = {Transactions of the American Mathematical Society},
    VOLUME = {351},
      YEAR = {1999},
    NUMBER = {8},
     PAGES = {3073--3117},
      ISSN = {0002-9947},
  review = {\MR{1467471 (99m:03099)}},
       DOI = {10.1090/S0002-9947-99-02145-5},
       URL = {http://dx.doi.org/10.1090/S0002-9947-99-02145-5},
}

\bib{Kunen80}{book}{
   author={Kunen, Kenneth},
   title={Set theory. An introduction to independence proofs},
   series={Studies in Logic and the Foundations of Mathematics},
   volume={102},
   publisher={North-Holland Publishing Co.},
   place={Amsterdam},
   date={1980},
   pages={xvi+313},
   isbn={0-444-85401-0},
   review={\MR{597342 (82f:03001)}},
}

\bib{Kunen2011}{book}{
   author={Kunen, Kenneth},
   title={Set theory},
   series={Studies in Logic (London)},
   volume={34},
   publisher={College Publications, London},
   date={2011},
   pages={viii+401},
   isbn={978-1-84890-050-9},
   review={\MR{2905394}},
}

\bib{MR1623206}{book}{                                                          
   author={Shelah, Saharon},                                                   
   title={Proper and improper forcing},                                         
   series={Perspectives in Mathematical Logic},                                 
   edition={2},                                                            
   publisher={Springer-Verlag},                                         
   place={Berlin},                                                              
   date={1998},                                                                 
   pages={xlviii+1020},                                                         
   isbn={3-540-51700-6},                                                        
   review={\MR{1623206 (98m:03002)}},                                           
}             

\end{biblist}

\end{bibdiv}

\end{document}